\numberwithin{equation}{section}
\newtheorem{thm}{Theorem}[section]
\newtheorem{lem}[thm]{Lemma}
\newtheorem{prop}[thm]{Proposition}
\newtheorem{df}[thm]{Definition}
\newtheorem{ass}[thm]{Assumptions}
\newcommand{\E}{\mathbb{E}}
\newcommand{\prob}{\mathbb{P}}
\newcommand{\Ppar}{\mathcal{P}_r}
\newcommand{\R}{\mathbb{R}}
\newcommand{\Z}{\mathbb{Z}}
\newcommand{\N}{\mathbb{N}}
\newcommand{\F}{\mathcal{F}}
\newcommand{\distrib}{\overset{\mbox{$\mathcal D$}}{=}}
\newcommand{\hilbert}{\mathcal{H}}
\def\be{\begin{equation} }
\def\ee{\end{equation} }
\begin{document}
\date{\today}
\title{Competing Particle Systems and the Ghirlanda-Guerra Identities}
\author{\Large Louis-Pierre Arguin \\[2ex] 
Weierstrass Institute for Applied Analysis and Stochastics
\\Berlin, Germany\thanks{arguin@wias-berlin.de}
}%
\maketitle  

\begin{abstract}
We study point processes on the real line whose configurations $X$ can be ordered decreasingly and evolve by increments which are functions of correlated gaussian variables.   The correlations are intrinsic to the points and quantified by a matrix $Q=\{q_{ij}\}$. 
Quasi-stationary systems are those for which the law of $(X,Q)$ is invariant under the evolution up to translation of $X$. 
It was conjectured by Aizenman and co-authors that the matrix $Q$ of robustly quasi-stationary systems must exhibit a hierarchal structure. 
This was established recently, up to a natural decomposition of the system, whenever the set $S_Q$ of values assumed by $q_{ij}$ is finite. In this paper, we study the general case where $S_Q$ may be infinite. Using the past increments of the evolution, we show that the law of robustly quasi-stationary systems must obey the Ghirlanda-Guerra identities, which first appear in the study of spin glass models. This provides strong evidence that the above conjecture also holds in the general case. 

\noindent
{\bf Keywords:} Point processes,  ultrametricity, Ghirlanda-Guerra identities.  
 \\[1ex]
(2000 Mathematics Subject Classification: 60G55, 60G10, 82B44)\\
\vspace*{1cm}
\end{abstract}
\newpage 
\tableofcontents

\section{Introduction}
\subsection{Background}
Competing particle systems are point processes $X=\{X_i\}$ on $\R$ whose configurations can be ordered in decreasing order $X_1\geq X_2\geq ...$. 
We study a dynamics of $X$ where the particles compete in the sense that, at each time step, the positions are evolved by increments whose correlations depend on intrinsic characteristics of the points. Precisely, we assign to each $X$ a covariance or {\it overlap} matrix $Q=\{q_{ij}\}$. The overlap $q_{ij}$ quantifies the similarity between the $i$-th point and the $j$-th point. We set the overlap to $1$ when the particles are identical i.e. $q_{ii}=1$ for all $i$. As $Q$ is a covariance matrix, it follows that $|q_{ij}|\leq 1$. The overlaps are not affected by the dynamics and are simply permuted under evolution. Precisely, let $\psi$ be some real function, the dynamics is $(X, Q) \mapsto (\widetilde X, \widetilde Q)$
with 
\begin{eqnarray}
\widetilde X_i &=& X_{\pi(i)} +\psi(\kappa_{\pi(i)}) \nonumber 
  \\      
\widetilde q_{ij} &=& q_{\pi(i)\pi(j)} \, ,
\label{eq:ev_XQ}
\end{eqnarray}  
where $\pi$ is a permutation of $\N$ which reorders $\widetilde X_i$ and $\kappa$ is a gaussian field independent of $X$ with covariance given by an entry-wise power of $Q$. 

The question of interest is to characterize the distributions on the pair $(X,Q)$ which are quasi-stationary in the sense that the joint law of the gaps of $X$ and $Q$ is invariant under the stochastic evolution \eqref{eq:ev_XQ} (see also \cite{Chat_Pal} and \cite{Pal_Pitman} for related setups). The uncorrelated case where $Q$ is the identity was handled in \cite{RA}. Under mild assumptions on $X$, it was shown that quasi-stationarity implies that the statistics of the gaps are those of a Poisson process on $\R$ with exponential density. The correlated case was first studied in \cite{argaiz}. It was proven that, under some robustness conditions on the quasi-stationary property and up to a natural decomposition of the system, $Q$ must exhibit a hierarchal structure whenever the state space of the overlaps was finite i.e. the possible values taken by $q_{ij}$. The aim of this paper is to provide evidence that the hierarchal structure is also necessary for quasi-stationarity to hold when the state space is infinite. Namely, we establish that $Q$ must satisfy constraining identities which are consistent with the hierarchal structure. These identities are known as the Ghirlanda-Guerra identities in statistical mechanics \cite{GG}.

For our purpose, we can assume that $X$ has infinitely many particles a.s. because no finite systems can be quasi-stationary due to the spreading of the gaps under evolution \cite{RA}.
As in \cite{argaiz}, we restrict ourselves to $X$ for which there exists $\beta>0$ such that $\sum_i e^{\beta X_i}<\infty \text{ a.s.}$
In this case, one can see $(X,Q)$ as a Random Overlap Structure or ROSt $(\xi,Q)$ by mapping $X$ to the exponentials of the position:
\be
\xi_i=\frac{e^{\beta X_i}}{\sum_i e^{\beta X_i}}\ .
\label{eqn rost}
\ee
\begin{df}
A ROSt is a random variable on the space $\Omega_{os}:=P_m\times \mathcal{Q}$ where $P_m$  is the space of sequences $(s_i,i\in\N)$ such that $s_1\geq s_2\geq ...\geq 0$ with $\sum_i s_i\leq 1$ and $\mathcal{Q}$ is the space of positive semi-definite symmetric matrices with $1$ on the diagonal.
\end{df}
The space $\Omega_{os}$ is equipped with the uniform topology on the sequences $s$ together with the topology on $\mathcal{Q}$ inherited from the product topology on $[-1,1]^{\N\times\N}$. This renders the space $\Omega_{os}$ compact and separable (see \cite{argaiz} for details).
From the ROSt perspective, we may assume that $Q$ is supported on positive definite matrix i.e. that $|q_{ij}|<1$. Indeed, we simply identify two particles $i$ and $j$ for which $q_{ij}=1$ and add their weight. From \eqref{eqn rost}, we see that the competitive evolution \eqref{eq:ev_XQ} becomes
\begin{equation}
(\xi,Q)\mapsto \Phi_{\psi(\kappa)}(\xi,Q):=\left(\left(\frac{\xi_ie^{\psi(\kappa_i)}}{\sum_j \xi_je^{\psi(\kappa_j)}},i\in\N\right)_\downarrow,\pi\circ Q\circ\pi^{-1}\right)\ .
\label{evolution rmpr1}
\end{equation}
Again, $\pi$ is the reshuffling induced by the mapping and the symbol $\downarrow$ means that the weights are reordered in decreasing order after evolution. The evolved weights are normalized to sum up to $1$. For simplicity, we will sometimes drop the dependence on $\psi$ and write $\Phi_r$ for the mapping \eqref{evolution rmpr1} where $\kappa$ has covariance $Q^{*r}$, the $r$-th entry-wise power of $Q$. Since the normalized weights depend only on the gaps of $X$, quasi-stationarity of $(X,Q)$ under \eqref{eq:ev_XQ} translates into the invariance of the law of $(\xi,Q)$ under $\Phi_r$.
\begin{df}
Fix $\psi:\R\to\R$.
A ROSt $(\xi,Q)$ is quasi-stationary under $\Phi_r$ if 
$$ \Phi_r(\xi,Q)\distrib (\xi,Q)$$
where the symbol $\distrib$ means equality in distribution.
It is said to be robustly quasi-stationary if it is quasi-stationary under $\Phi_{r}$ for infinitely many $r\in\N$.
\end{df} 
Note that quasi-stationary ROSt's must satisfy $\sum_i\xi_i=1$ a.s. due to renormalization of the weights. As $\Omega_{os}$ is compact and separable, one can decompose a quasi-stationary ROSt under $\Phi_r$ into ergodic ROSt's for which the only functions $f:\Omega_{os}\to\Omega_{os}$ satisfying
$ \E_r\left[f\left(\Phi_r(\xi,Q)\right)\big| \xi,Q\right]=f(\xi,Q)$ a.s. are the constants.

A sufficient condition for the evolution \eqref{evolution rmpr1} to be non-singular and for $\Phi_r(\xi,Q)$ to be a ROSt is the finiteness of the expectation of $e^{\psi(\kappa)}$.
Throughout this paper, $\psi$ will be fixed and assumed to belong to the following class of functions which ensures this condition. This class also allows a good control on the evolution.
 \begin{ass}
The function $\psi:\R\to\R$ is in $C^2(\R)$ with bounded derivatives.
Furthermore, for $Y$ a standard gaussian variable, the law of $\psi(Y)$ is absolutely continuous with respect to the Lebesgue measure.
\label{intro ass}
\end{ass}


\subsection{Main Results}
The only known examples of quasi-stationary ROSt's for all $\psi$ satisfying Assumption \ref{intro ass} are given by the so-called Ruelle Probability Cascades or RPC's \cite{Ruelle,BS, AS22}. The RPC's are constructed from Poisson-Dirichlet variables and the Bolthausen-Sznitman coalescent.
This coalescent is a Markov process $\Gamma=(\sim_t, \ t\geq 0)$ on the space of equivalence relations on $\N$ for which $i\sim_t j$ implies $i\sim_s j$ for all $s\geq t$. For more on these processes, the reader is referred to \cite{bertoin, BS}.
\begin{df}
Let $x:q\mapsto x(q)$ be a distribution function on $[0,1]$ with $x(1^-)\neq 1$. A RPC with parameter $x$ is the ROSt $(\xi,Q)$ where $\xi$ is a Poisson-Dirichlet variable $PD(x(1^-),0)$ and $Q$ is as follows. Let $\Gamma$ be a Bolthausen-Sznitman coalescent independent of $\xi$. Then
$$ q_{ij}= x^{-1}(e^{-\tau_{ij}})$$
where $\tau_{ij}:=\min\{t \ :  i\sim_t j \}$ and $x^{-1}$ is the right-continuous inverse of $x$. In particular, $\prob(q_{ij}\leq q)=x(q)$ for all $i\neq j$.
\label{df rpc}
\end{df}
It was conjectured by Aizenman {\it et al} that the RPC's were the only ROSt's that are quasi-stationary in a "robust" sense, where the notion of robustness was to still be determined \cite{AS22}.
The striking point of the conjecture, if proven true, is the necessity of hierarchal correlations for stability under competitive evolution. Indeed, the RPC inherits a hierarchal structure from the coalescent i.e.  
\begin{equation}
 (q_{ij}=q \text{ and } q_{jk}=r) \Longrightarrow q_{ik}=\min\{q,r\}\ .
 \label{eqn ultra}
 \end{equation}
 
A proof of a version of the conjecture was given in \cite{argaiz} for systems with finite state-space i.e. for which the random set $S_Q := \{q_{ij}: 1\leq i<j <\infty\} $ is finite a.s. Such systems can be decomposed into subsystems called {\it $Q$-factors} for which the sets $S_Q(i):=\{q_{ij}: j\neq i\}$ are identical for each $i$. It was proven that if $(\xi,Q)$ is robustly quasi-stationary and ergodic for all multiples of a smooth function $\psi$, then each of its $Q$-factors is a RPC. Our first result is to show that the decomposition into $Q$-factors is not necessary whenever quasi-stationarity is assumed under $\psi(\beta\kappa+h)$ for all $\beta\geq 0$ amd for all $r\in\N$ provided $\psi'(h)\neq 0$.
\begin{thm}
Let $h\in\R$ be such that $\psi'(h)\neq 0$. If a ROSt with finite state space is quasi-stationary and ergodic under $\Phi_r$ with function $\psi(\beta \kappa+h)$ for all $\beta\geq 0$ and for all $r\in\N$, then it is a RPC. In particular, $Q$ satisfies \eqref{eqn ultra} almost surely.
\label{thm main1}
\end{thm}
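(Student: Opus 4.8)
The plan is to reduce the statement to the finite state-space result of \cite{argaiz} and then to use the external field to collapse the $Q$-factor decomposition that this result carries. For $\beta>0$ set $g_\beta(x):=\psi(\beta x+h)$; this function is again $C^2$ with bounded derivatives, and $g_\beta(Y)$ is absolutely continuous because $\beta Y+h$ is a nondegenerate Gaussian and $\psi(Y)$ is, so $g_\beta$ satisfies Assumption~\ref{intro ass}. Hence the hypothesis says precisely that $(\xi,Q)$ is quasi-stationary and ergodic under $\Phi_r$ with function $g_\beta$, for every $r\in\N$ and every $\beta>0$ simultaneously. What the proof of the structure theorem in \cite{argaiz} actually uses is a smooth one-parameter family of quasi-stationary evolutions possessing a nondegenerate first variation; since $\tfrac{d}{d\beta}g_\beta\big|_{\beta=0}$ is the identity map times $\psi'(h)\neq0$, the family $\{g_\beta\}_{\beta\ge0}$ qualifies, and I would conclude, exactly as in \cite{argaiz}, that each $Q$-factor of $(\xi,Q)$ is a RPC. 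It then remains to prove that a single $Q$-factor occurs, i.e.\ that $S_Q(i)$ is a.s.\ the same for all $i$; granting this, $(\xi,Q)$ is itself a RPC and \eqref{eqn ultra} follows from Definition~\ref{df rpc}.

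To exclude a genuine superposition of factors I would argue by contradiction. Since $\Phi_r$ merely permutes the entries of $Q$, the finite set $S_Q$ and the (necessarily finite) set of types $\{S_Q(i):i\in\N\}$ are invariant under the evolution, hence deterministic by ergodicity. Suppose two distinct types $\theta\neq\theta'$ both occur, with positive total weights $W_\theta:=\sum_{i:\,S_Q(i)=S_\theta}\xi_i$ and $W_{\theta'}$. I would apply the invariance identity $\E[F(\Phi_r(\xi,Q))]=\E[F(\xi,Q)]$ to a functional $F$ recording $(W_\theta,W_{\theta'})$ together with finitely many overlaps inside and across the two factors, differentiate at $\beta=0$ (legitimate since $\psi\in C^2$ has bounded derivatives), and keep the linear term, which carries the nonzero factor $\psi'(h)$. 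Letting $r\to\infty$, all off-diagonal powers $q_{ij}^r\to0$ uniformly because $|q_{ij}|<1$ on the finite set $S_Q$, so the Gaussian increments driving the two factors decouple into independent fields, and the identity becomes an invariance of the law of $(W_\theta,W_{\theta'})$ under evolving the two factors by independent fields. One then checks that this cannot hold for a genuine superposition of two distinct RPCs: the two partition-function contributions $Z_\theta:=\sum_{i:\,S_Q(i)=S_\theta}\tfrac{\xi_i}{W_\theta}\,e^{g_\beta(\kappa_i)}$ and $Z_{\theta'}$ are then independent and nondegenerate, so the evolved split $W_\theta Z_\theta/(W_\theta Z_\theta+W_{\theta'}Z_{\theta'}+\cdots)$ acquires fluctuations absent from $W_\theta$. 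Hence a single type occurs. (Alternatively, once the Ghirlanda-Guerra identities proved in this paper are available for the whole system, they force $S_Q(i)=S_Q$ for a.e.\ $i$ --- by sampling arbitrarily many replicas --- hence a single factor, and the finiteness of $S_Q$ then yields \eqref{eqn ultra} directly.)

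The step I expect to be the real obstacle is this last one, since the inter-factor weights $(W_\theta)_\theta$ do not evolve autonomously: both the normalization in \eqref{evolution rmpr1} and the reshuffling $\pi$ couple indices across all factors, so one must carefully extract from the invariance identity a relation involving only the $(W_\theta)_\theta$ and the decoupled partition functions, and then certify that every nontrivial superposition of RPCs violates it. Making the $r\to\infty$ decoupling rigorous --- via compactness of $\Omega_{os}$ together with the uncorrelated input of \cite{RA} --- and establishing the nondegeneracy of the $Z_\theta$ is where the work concentrates; the reduction in the first step is essentially a transcription of \cite{argaiz}.
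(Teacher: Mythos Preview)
Your first step is in the right spirit, though the paper makes it precise via Lemma~\ref{lem reduction}: a CLT argument shows that quasi-stationarity under $\psi(\beta\kappa+h)$ for $\beta$ near $0$ implies quasi-stationarity under linear functions $\lambda\kappa$, after which Theorem~4.4 of \cite{argaiz} applies directly to give that each $Q$-factor is an RPC.

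Your second step---collapsing the $Q$-factor decomposition---is where you diverge from the paper and where there is a genuine gap. The paper's route is entirely different and much shorter: it introduces the \emph{past velocity} $v_i=\lim_{T\to\infty}\tfrac1T\sum_{t=1}^T\psi(\kappa_i(-t))$, proves it is common to all particles and deterministic for ergodic systems (Proposition~\ref{prop common}), and computes it for linear $\psi$ as $v=\lambda\int(1-q^r)\,dx(q)$ (Lemma~\ref{ppar}). Since this scalar must agree across $Q$-factors for every $r\in\N$, all moments of the $\xi$-sampled overlap law $dx(q)$ coincide across factors, hence so does $x$; but $x$ determines the law of an RPC (Definition~\ref{df rpc}), so there is only one factor. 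No analysis of inter-factor weights is needed.

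Your proposed route has a real hole beyond the difficulties you flag. Even granting the $r\to\infty$ decoupling, invariance of the law of $(W_\theta,W_{\theta'})$ under independent evolutions does \emph{not} by itself contradict a superposition: two RPCs with the \emph{same} parameter $x$ would have identically distributed $Z_\theta,Z_{\theta'}$, and you would need a separate argument to rule out a nontrivial stationary split between them. Your fluctuation heuristic (``$W_\theta Z_\theta/(\cdots)$ acquires fluctuations absent from $W_\theta$'') does not distinguish this case from the single-factor case. The paper's velocity argument bypasses this entirely: it establishes directly that the parameters coincide, which forces the types $S_Q(i)$ to coincide, hence a single factor by definition.
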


In the general case where $S_Q$ may be infinite, it was shown in \cite{argaiz} that:
\begin{thm}[Theorem 4.2 in \cite{argaiz}]
Let $(\xi,Q)$ be a ROSt that is robustly quasi-stationary and ergodic for some function $\psi$ satisfying Assumption \ref{intro ass}. The following hold:
\begin{enumerate}
\item $\xi$ is a Poisson-Dirichlet variable independent of $Q$;
\item $Q$ is directed by a random probability measure $\mu$ on a Hilbert space $\hilbert$:  \\for $i\neq j$, $
q_{ij}=(\phi_i,\phi_j)$ where $(\phi_i,i\in\N)$ are iid $\mu$-distributed.
\end{enumerate}
\label{thm general}
\end{thm}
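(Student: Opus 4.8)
\medskip

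\noindent\emph{Proof proposal.} The plan is to split the two conclusions, treating the weights $\xi$ by reduction to the uncorrelated case of \cite{RA} and the matrix $Q$ by the Dovbysh--Sukhov representation theorem; both pass through a limiting dynamics obtained by letting the correlations die out. Since $Q$ is supported on positive definite matrices, $|q_{ij}|<1$ a.s., so the entry-wise powers satisfy $Q^{*r}\to\mathrm{Id}$ as $r\to\infty$. Hence, conditionally on $(\xi,Q)$, the gaussian field $\kappa$ with covariance $Q^{*r}$ converges in finite-dimensional law to a field $g=(g_i)_{i\in\N}$ of i.i.d.\ standard gaussians independent of $(\xi,Q)$. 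By Assumption \ref{intro ass} the reweighted masses $\xi_i e^{\psi(g_i)}$ are a.s.\ pairwise distinct, so the reordering in \eqref{evolution rmpr1} is a.s.\ continuous at the limit and the continuous-mapping theorem gives $\Phi_r(\xi,Q)\Rightarrow\Phi_\infty(\xi,Q)$, where $\Phi_\infty$ reweights the masses by i.i.d.\ copies of $e^{\psi(g)}$ and transports $Q$ along the induced reshuffling. Quasi-stationarity under $\Phi_r$ for infinitely many $r$ then yields $\Phi_\infty(\xi,Q)\distrib(\xi,Q)$, and composing along the same sequence yields invariance under reweighting by every convolution power of the law of $\psi(g)$; ergodicity is inherited by these maps.

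\medskip

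\noindent For the weights, the mass marginal of $\Phi_\infty(\xi,Q)$ is simply the i.i.d.\ Gibbs reweighting of $\xi$ and does not involve $Q$, so $\xi$ on its own is a robustly quasi-stationary uncorrelated ROSt. By the characterization of such ROSt's in \cite{RA}, $\xi$ must be a Poisson--Dirichlet variable, and ergodicity makes its parameter deterministic; this is the first half of assertion (1).

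\medskip

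\noindent It remains to show that, once $\xi$ is Poisson--Dirichlet, joint invariance under $\Phi_\infty$ (and its convolution powers) forces $Q$ to be weakly exchangeable and independent of $\xi$; this is the structural heart of the matter. The approach I would take is to pass to the stick-breaking, size-biased description of the Poisson--Dirichlet law: along that ordering the reweighting reshuffle amounts to re-running the size-biased sampling with fresh, again Poisson--Dirichlet, weights, so the overlap carried by the $k$-th and $l$-th new points is the overlap between two independently size-biased points of the old configuration. Matching this against the law of $(\xi,Q)$ itself, and using several convolution powers of the reweighting to exclude accidental coincidences, should force the law of $Q$ read along the size-biased ordering to be invariant under all finite permutations of indices (weak exchangeability) and force the joint law of $(\xi,Q)$ to factorize. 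The Dovbysh--Sukhov theorem, applied to the weakly exchangeable positive semi-definite array $Q$ with unit diagonal and off-diagonal entries of modulus $<1$, then gives $q_{ij}=(\phi_i,\phi_j)$ for $i\neq j$ with $(\phi_i)_{i\in\N}$ i.i.d.\ samples of a random probability measure $\mu$ on a Hilbert space $\hilbert$, completing assertions (1) and (2).

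\medskip

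\noindent The decorrelation step and the reduction to \cite{RA} are essentially routine; I expect the overlap rigidity to be the main obstacle. Even the clean limit $\Phi_\infty$ acts on $Q$ through an intricate random relabelling, and extracting from its invariance both full exchangeability of $Q$ and its independence from $\xi$, while at the same time controlling the unbounded masses $\xi_i$, the only a.s.-continuous reordering, and the passage from invariance under these particular relabellings to genuine exchangeability, is where the real work lies. This is precisely the structural constraint that the present paper, in the general case of infinite state space, recasts as the Ghirlanda--Guerra identities.
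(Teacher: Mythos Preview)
The present paper does not contain a proof of this statement: it is quoted verbatim as Theorem~4.2 of \cite{argaiz} and used throughout as a known input (see, for instance, the proof of Theorem~\ref{thm main2}, which invokes the independence of $\xi$ and $Q$ and the directing measure $\mu$ as given facts). There is therefore no in-paper proof to compare your proposal against.

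That said, your outline is in the right spirit and broadly matches what one expects of the argument in \cite{argaiz}: decorrelate via $Q^{*r}\to\mathrm{Id}$ along $r\to\infty$ to reduce the weight marginal to the i.i.d.\ reweighting setting of \cite{RA} (or rather its ROSt version in \cite{ra lp}), and once weak exchangeability of $Q$ is secured, invoke the Dovbysh--Sudakov representation. You are also candid that the substantive step is proving that $Q$ is weakly exchangeable and independent of $\xi$; the paragraph you devote to it (size-biased reshuffling ``should force'' factorization and permutation invariance) is a heuristic, not an argument, and this is precisely where the real content of \cite{argaiz} lies. Your convergence $\Phi_r\Rightarrow\Phi_\infty$ also needs a tightness/uniform-integrability check on the normalizing sum $\sum_j\xi_j e^{\psi(\kappa_j)}$, though under Assumption~\ref{intro ass} this is routine.

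One genuine confusion to flag: your final paragraph says the present paper ``recasts'' this structural constraint as the Ghirlanda--Guerra identities. That inverts the logical flow. Theorem~\ref{thm general} is \emph{assumed} here, not derived; the Ghirlanda--Guerra identities of Theorem~\ref{thm main2} are an \emph{additional} restriction on the directing measure $\mu$, obtained on top of the Poisson--Dirichlet/Dovbysh--Sudakov representation by the past-velocity argument of Section~2. They are not a reformulation of Theorem~\ref{thm general} but a further constraint consistent with the RPC conjecture.
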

In the case of finite state space, the directing measure is discrete. It is then possible to carry an induction argument on the cardinality of the state space to prove that the directing measure is again a cascade.
In the present paper, we provide strong identities that must be generally satisfied by the directing measure of a quasi-stationary ROSt. Our main result is:
 \begin{thm}
Let $h\in\R$ be such that $\psi'(h)\neq 0$. Consider a ROSt that is quasi-stationary and ergodic under $\Phi_r$ with function $\psi(\beta\kappa+h)$ for all $\beta$ in an interval containing $0$ and for every $r\in\N$. Then, its directing measure $\mu$ satisfies
\begin{equation}
\E\left[ \bigotimes_{t=1}^s\mu\left(q_{s,s+1}\in A \right)\ \Big| \F_s\right] =\frac{1}{s}\E\left[\mu\otimes\mu\left(q_{12}\in A\right)\right]+\frac{1}{s}\sum_{l=1}^{s-1}\chi_A(q_{ls})
\label{gg1}
\end{equation}
for every $s\in\N$
where $A\subseteq [-1,1]$, $\chi_A$ is the identity function of the set $A$ and $\F_s$ is the $\sigma$-field generated by the Gram matrix of $s$ vectors.
\label{thm main2}
\end{thm}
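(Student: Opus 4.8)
The strategy is to convert the invariance of the law of $(\xi,Q)$ under the perturbed evolutions $\Phi_r$ with $\psi(\beta\kappa+h)$ into a hierarchy of ``cavity'' identities for the overlaps, and then to close that hierarchy using ergodicity. By Theorem~\ref{thm general} (applicable since $\psi(\beta_0\kappa+h)$ still satisfies Assumption~\ref{intro ass} for any fixed $\beta_0\neq0$ in the interval) we may assume from the start that $\xi$ is Poisson--Dirichlet, independent of $Q$, and that $Q$ is directed by a measure $\mu$ on a Hilbert space. Write $\langle\,\cdot\,\rangle$ for the average over replicas drawn from $\xi$ and $\E$ for the average over $(\xi,Q)$. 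By the correspondence between $\mu$-samples and $\langle\,\cdot\,\rangle$-samples, by uniform density of polynomials on $[-1,1]$, and by a monotone-class/martingale argument in the replica labels, it suffices to prove the ``moment'' version of \eqref{gg1}: for every $r,n\in\N$ and every bounded continuous function $f$ of the overlaps $q_{ll'}$, $1\le l,l'\le n$,
\begin{equation}
\E\big\langle f\, q_{n,n+1}^{r}\big\rangle=\frac1n\,\E\langle f\rangle\,\E\langle q_{12}^{r}\rangle+\frac1n\sum_{l=1}^{n-1}\E\big\langle f\, q_{ln}^{r}\big\rangle .
\label{eq:ggmom}
\end{equation}

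The first step is a \emph{raw cavity identity}. Fix $r$ and apply quasi-stationarity to the functional $(\xi,Q)\mapsto\langle f\rangle_n$. Removing the reshuffling permutation by reindexing the sum, the evolved functional equals $\langle f\rangle_{\beta,n}$, the $n$-replica average computed with the tilted weights $\propto\xi_i\,e^{\psi(\beta\kappa_i+h)}$ and then averaged over the cavity field $\kappa\sim N(0,Q^{*r})$; invariance thus says that $\beta\mapsto\E\langle f\rangle_{\beta,n}$ is constant near $0$. Assumption~\ref{intro ass} ($\psi\in C^2$ with bounded derivatives, $\psi(Y)$ absolutely continuous), together with $\sum_i\xi_i=1$, justifies differentiating under the expectation and controls the tilted partition function. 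The first $\beta$-derivative at $0$ vanishes since $\E[\kappa_i\mid\xi,Q]=0$; the second derivative, after Gaussian integration by parts in $\kappa$ (using $\E[\kappa_i\kappa_j\mid\xi,Q]=q_{ij}^{r}$ and $q_{ii}^{r}=1$), collapses to the relation
\begin{equation}
\sum_{\substack{k,k'\le n\\ k\ne k'}}\E\big\langle f\,q_{kk'}^{r}\big\rangle-2n\sum_{k\le n}\E\big\langle f\,q_{k,n+1}^{r}\big\rangle+n(n+1)\,\E\big\langle f\,q_{n+1,n+2}^{r}\big\rangle=0 ,
\label{eq:raw}
\end{equation}
the coefficient $\psi'(h)^2$, nonzero by hypothesis, having been divided out (the contributions proportional to $\psi''(h)$ and to $\E\langle f\rangle$ cancel because $\langle f\rangle_m=\langle f\rangle_n$ for $m\ge n$).

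It remains to pass from \eqref{eq:raw} to \eqref{eq:ggmom}. Note that \eqref{eq:raw} is purely ``connected'': it contains the fully disconnected correlator $\E\langle f\,q_{n+1,n+2}^{r}\rangle$ but not the factorized product $\E\langle f\rangle\,\E\langle q_{12}^{r}\rangle$ that appears in \eqref{eq:ggmom}. Supplying that product is exactly the role played, in the statistical-mechanics derivation of the Ghirlanda--Guerra identities, by concentration of the free energy; here the thermodynamic limit is absent, and ergodicity takes its place. Concretely, one applies the same differentiation and integration-by-parts scheme to a scalar ``cavity free energy'' functional $\mathfrak g_\beta(\xi,Q)$ built from $\log\sum_i\xi_i\,e^{\psi(\beta\kappa_i+h)}$, and shows that an appropriate $\Phi_r$-invariant combination of it is, by the ergodicity hypothesis, almost surely equal to a constant; this constancy lets one overlap factor be decoupled from $f$ in \eqref{eq:raw}. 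Reorganizing the remaining pair-overlap terms by exchangeability of the replicas then turns \eqref{eq:raw} into precisely \eqref{eq:ggmom}. (Conversely, \eqref{eq:ggmom} implies \eqref{eq:raw}, so the cavity identity carries no less information than it should.)

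Finally, letting $r$ range over all of $\N$ and using uniform density of polynomials on $[-1,1]$ upgrades \eqref{eq:ggmom} to the same identity with $q^{r}$ replaced by an arbitrary continuous function of the overlap, hence by $\chi_A$; a monotone-class argument in the replica labels then yields the conditional statement \eqref{gg1}, with the conditioning on the $\sigma$-field $\F_s$ of the Gram matrix of $s$ vectors. The step that I expect to be the main obstacle is the ergodic self-averaging above: when $S_Q$ is infinite there is no enumeration of overlap values to exploit, so one must isolate a genuinely $\Phi_r$-invariant functional whose invariance is \emph{exact} rather than asymptotic and to which the ergodicity hypothesis applies directly — this is the point at which the argument has to replace the finite-volume concentration estimates available in the spin-glass setting.
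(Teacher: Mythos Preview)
Your derivation of the ``raw cavity identity'' is correct and coincides with the paper's Proposition~\ref{prop AC} (the Aizenman--Contucci identities). You also correctly identify that the passage from it to the moment Ghirlanda--Guerra identity requires a factorization of the type $\E\langle f\,q_{n+1,n+2}^{r}\rangle=\E\langle f\rangle\,\E\langle q_{12}^{r}\rangle+\ldots$, and that this is where ergodicity must replace the thermodynamic concentration available in spin glasses. However, you leave exactly this step open: you propose to find ``an appropriate $\Phi_r$-invariant combination'' of a free-energy functional but do not construct it, and you yourself flag this as the main obstacle. It is indeed the whole difficulty, and no functional of $(\xi,Q)$ alone will do the job, because $\E_\kappa\log\sum_i\xi_i\,e^{\psi(\beta\kappa_i+h)}$ is \emph{not} invariant under the evolution; without a concrete object on which to invoke ergodicity, the argument does not close.

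The paper resolves this with a construction you do not anticipate: it extends the probability space to include the \emph{past} increments $(\kappa(-t))_{t\ge1}$ and shows (Lemma~\ref{lem exist}) that, conditionally on $(\xi,Q)$, this sequence is exchangeable. The \emph{past velocity} $v_i=\lim_{T\to\infty}T^{-1}\sum_{t=1}^T\psi(\kappa_i(-t))$ then exists by de~Finetti and the law of large numbers. The key step (Proposition~\ref{prop common}) is that $v_i$ is common to all particles: since by quasi-stationarity $\mathcal{P}_r(\lambda)=T^{-1}\E_r\big[\log\sum_i\xi_i\,e^{\lambda\sum_{t<T}\psi(\kappa_i(t))}\big]$ is independent of $T$, its finite second $\lambda$-derivative forces the $\xi$-variance of the time-averaged increments to be $O(1/T)$. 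Ergodicity of the extended system then makes this common velocity a deterministic constant. The needed factorization (Lemma~\ref{factorization}) is now immediate: by exchangeability in $t$, $\E_r\big[\sum\xi_{i_1}\cdots\xi_{i_s}\,\kappa_{i_1}(-1)\,F_s\big]$ equals the same expression with $\kappa_{i_1}(-1)$ replaced by its time average, which converges to the deterministic velocity and factors out of the expectation. A single Gaussian differentiation of this factorization in $\lambda$ yields the moment identity directly, without routing through the Aizenman--Contucci relation. Thus the role of the volume $N$ in the spin-glass proof is played here by the number $T$ of past time steps, and the past-increment extension is precisely the missing ingredient that makes that substitution rigorous.
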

More generally, we obtain an identity for the $r$-th moment whenever $(\xi,Q)$ is invariant under $\Phi_r$. In the case where quasi-stationarity holds for every $\Phi_r$, these determine the conditional distribution.
The identities \eqref{gg1} are known as the Ghirlanda-Guerra identities in the study of spin glass models \cite{GG, CG}. It is a non-trivial fact that they arise in the general setting of competing particle systems. 
They are satisfied by the RPC's and hence consistent with hierarchal overlaps.
In fact, the Ghirlanda-Guerra identities have a simple interpretation:
conditionally on the inner product of $s$ vectors $q_{12}$ ... $q_{s-1,s}$, the inner product of an additional vector drawn under $\mu$ with a previous one is independent of the given frame with probability $1/s$ or takes the value $q_{ls}$, $1\leq l\leq s-1$, each with probability $1/s$. 

The main concept used to derive Theorems \ref{thm main2} and \ref{thm main1} is the so-called {\it past velocity}. Precisely, in Section 2, we consider independent time-steps of the evolution $\Phi_r$ keeping track of the past time-steps. The past velocity is simply defined as the time-average of the past increments. It is shown to exist and to be common to all particles whenever the system is quasi-stationary. Similarly as in \cite{argaiz}, the study of the evolution for a generic $\psi$ can be reduced to a linear $\psi$ by a Central Limit Theorem argument as explained in Appendix \ref{app var}. It turns out that the collection of velocities obtained from the different linear evolutions single out the parameter of the RPC thereby yielding Theorem \ref{thm main1}. In Section 3, we used the fact that the velocity is common (and deterministic for ergodic systems) to conclude that the distribution of $Q$ satisfies the Ghirlanda-Guerra identities. The argument is very similar to the proof of these identities for spin glass models in the sense that the common velocity plays the role of the self-averaging of the free energy. Along the way, we also prove that quasi-stationary ROSt's obey the so-called Aizenman-Contucci identities, which can be seen as a weaker version of the Ghirlanda-Guerra identities \cite{AC}.


\section{The Past Velocity}
\subsection{Definition}
The past velocity naturally appears when re-expressing the evolution \eqref{evolution rmpr1} as a deterministic mapping on a space that includes the past and future increments of the evolution. 

Let $\nu_{Q^{*r}}$ be the law of the gaussian field $\kappa$ with covariance $Q^{*r}$ and $\prob$ the law of some ROSt. We consider $\prob_r$ the probability measure on $\Omega_{os}\times \prod_{t=0}^\infty \R^\N$ consisting of $\prob$, coupled through $Q$, with independent copies of the field:
\begin{equation}
d\prob_r=d\prob(\xi,Q)\times\prod_{t\geq 0} d\nu_{Q^{*r}}(\kappa(t)).
\label{forw}
\end{equation}
Clearly, the future increments $(\kappa(t),t\geq0)$ are exchangeable given $(\xi,Q)$ as they are simply iid. 
We are interested in extending the probability measure $\prob_r$ in a consistent way to include the past increments $(\kappa(t),t<0)$ and thus get a probability measure on 
$$\Omega:=\Omega_{os}\times\prod_{t\in\Z} \R^\N.$$ 
The relevant dynamics on the space $\Omega$ is the evolution \eqref{evolution rmpr1} on $(\xi,Q)$ together with a time-shift of the fields. We stress that the field $\kappa$ must also be reindexed after evolution.
\begin{df}
Let $\Phi_{\psi(\cdot)}$ be of the form \eqref{evolution rmpr1}. We define the mapping $\Lambda: \Omega\to\Omega$
$$ \Lambda(\omega)=\Lambda(\xi,Q,(\kappa(t),t\in\Z)):=\left(\Phi_{\psi(\kappa(0))}(\xi,Q), (\kappa_\downarrow(t+1),t\in\Z)\right)$$
where $\downarrow$ stands for the reindexing of the gaussian field with respect to the ordering of the points after evolution by $\Phi_{\psi(\kappa(0))}$.
\end{df}

It is shown in Appendix \ref{app ext} that the extension of $\prob_r$ to $\Omega$ exists whenever the system is quasi-stationary. Furthermore, similarly as for the future increments, the sequence of past increments is exchangeable conditionally on $(\xi,Q)$.
 \begin{lem}[Appendix \ref{app ext}]
Let $(\xi,Q)$ be a quasi-stationary ROSt under $\Phi_r$ for some $r\in\N$. There exists a unique $\Lambda$-invariant probability measure on $\Omega$ whose restriction on $\Omega_{os}$ is the law of $(\xi,Q)$. This measure is ergodic under $\Lambda$ if and only if $(\xi,Q)$ is ergodic.

Moreover, the sequence of past increments $(\kappa(t),t<0)$ is exchangeable under this probability measure conditionally on $(\xi,Q)$.
\label{lem exist}
\end{lem}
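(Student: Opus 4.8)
I would obtain the measure on $\Omega$ as the natural extension of a one-sided measure-preserving system built from the forward increments, and then identify that abstract inverse limit with $\Omega$ itself. Concretely, set $\Omega_+:=\Omega_{os}\times\prod_{t\ge 0}\R^\N$, equip it with $\prob_r$ as in \eqref{forw}, and let $\Lambda_+:\Omega_+\to\Omega_+$ be the truncation of $\Lambda$ to nonnegative times, $\Lambda_+(\xi,Q,(\kappa(t))_{t\ge 0}):=(\Phi_{\psi(\kappa(0))}(\xi,Q),(\kappa_\downarrow(t+1))_{t\ge 0})$. The first thing to verify is that $\prob_r$ is $\Lambda_+$-invariant. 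Conditionally on $(\xi,Q,\kappa(0))$ the fields $\kappa(1),\kappa(2),\dots$ are i.i.d.\ $\nu_{Q^{*r}}$ and independent of $\kappa(0)$; applying the reordering permutation $\pi$ induced by $\Phi_{\psi(\kappa(0))}$ — a measurable, a.s.\ well-defined function of $(\xi,Q,\kappa(0))$ by Assumption \ref{intro ass} — turns them into i.i.d.\ $\nu_{\widetilde Q^{*r}}$ with $\widetilde Q=\pi\circ Q\circ\pi^{-1}$. Combined with the quasi-stationarity hypothesis $\Phi_r(\xi,Q)\distrib(\xi,Q)$, this matches all finite-dimensional distributions of $(\Lambda_+)_*\prob_r$ with those of $\prob_r$, so $\prob_r$ is $\Lambda_+$-invariant.

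\textbf{Natural extension realized on $\Omega$.} Since $\Omega_+$ is Polish, $(\Omega_+,\prob_r,\Lambda_+)$ admits a natural extension, which I would realize as the inverse limit $\varprojlim_n(\Omega_+,\Lambda_+)$ with its canonical invariant measure and its (bijective) shift. A compatible tower $(\omega_{-n})_{n\ge 0}$ with $\Lambda_+\omega_{-n-1}=\omega_{-n}$ is exactly the data of the present pair $(\xi,Q)$, the present forward fields $(\kappa(t))_{t\ge 0}$, and, for each $n\ge 1$, the gaussian field fed in at step $-n$ written in the present frame — i.e.\ a point of $\Omega$ — and under this identification the shift becomes $\Lambda$. (Here one uses that $\Lambda$ is a.s.\ bijective: from the evolved data one recovers the pre-image by reading off the consumed field in the new frame, re-sorting the weights, and thus reconstructing the reindexing permutation; ties occur with probability zero by Assumption \ref{intro ass}.) This produces a $\Lambda$-invariant probability measure on $\Omega$, still denoted $\prob_r$, that projects onto $\prob_r$ on $\Omega_+$ and hence restricts to the law of $(\xi,Q)$ on $\Omega_{os}$. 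Alternatively the same measure can be built directly by a Kolmogorov consistency argument, pushing $\prob_r$ forward through $\Lambda_+^n$ and recording the consumed fields.

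\textbf{Uniqueness, ergodicity, exchangeability.} For uniqueness, put $\mathcal G_{-n}:=\sigma(\xi,Q,\kappa(t):t\ge -n)$; since $\kappa(0)$ is the only field consumed in one step and $\Lambda$ is invertible, one checks $\Lambda^{-1}\mathcal G_{-n}=\mathcal G_{-(n-1)}$ for $n\ge 1$, so $\mathcal G_{-n}=\Lambda^{n}\mathcal G_0$ and $\bigvee_n\mathcal G_{-n}$ is the full Borel $\sigma$-field of $\Omega$; hence a $\Lambda$-invariant law on $\Omega$ is determined by its restriction to $\mathcal G_0$. It then remains to see that $\Lambda$-invariance together with the prescribed $\Omega_{os}$-marginal forces that restriction to be $\prob_r$, i.e.\ forces $(\kappa(t))_{t\ge 0}$ to be i.i.d.\ $\nu_{Q^{*r}}$ given $(\xi,Q)$ — which I would prove by a backward argument: at the next application of $\Lambda^{-1}$ the active field $\kappa(0)$ is un-reshuffled into the base frame and becomes the field driving that step, so invariance pins its conditional law given the present configuration and the entire past down to $\nu_{Q^{*r}}$, and one iterates. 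Ergodicity passes both ways between a system and its natural extension, and $(\Omega_+,\prob_r,\Lambda_+)$ is ergodic iff $(\xi,Q)$ is ergodic under $\Phi_r$: a factor of an ergodic system is ergodic, while conversely $\Lambda_+$ is a skew-product of $\Phi_r$ over i.i.d.\ noise, so a zero--one law on the fields makes every $\Lambda_+$-invariant function a function of $(\xi,Q)$ alone. Finally, exchangeability of $(\kappa(t))_{t<0}$ conditionally on $(\xi,Q)$ follows because, in the inverse-limit picture, $\kappa(-1),\dots,\kappa(-n)$ are the first $n$ members of a forward i.i.d.\ family transported along the dynamics into the present frame; a finite-dimensional computation tracking the reindexing permutations shows their conditional law given $(\xi,Q)$ is symmetric in these coordinates.

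\textbf{Main obstacle.} I expect the hard part to be the uniqueness step: showing that $\Lambda$-invariance together with \emph{only} the $\Omega_{os}$-marginal forces the forward fields to be i.i.d.\ $\nu_{Q^{*r}}$, and, relatedly, the careful bookkeeping of the reindexing permutation through the backward iteration — it depends on the very field it permutes — which is precisely what keeps the exchangeability of the past from being automatic.
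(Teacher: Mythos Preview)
Your construction via the natural extension of $(\Omega_+,\prob_r,\Lambda_+)$ is correct and is the same content as the paper's Kolmogorov-consistency argument, which you already mention as an alternative; the ergodicity transfer is likewise standard and matches the paper. On uniqueness you are reading the statement more literally than intended: in context (see the sentences preceding the lemma) the forward law $\prob_r$ on $\Omega_+$ is part of the data, and what is being claimed is uniqueness of its $\Lambda$-invariant extension to $\Omega$. For that, your filtration argument $\mathcal G_{-n}=\Lambda^n\mathcal G_0$ with $\bigvee_n\mathcal G_{-n}$ generating the Borel $\sigma$-field is already the complete proof. Your ``main obstacle'' --- recovering the i.i.d.\ Gaussian law of the forward fields from the $\Omega_{os}$-marginal and $\Lambda$-invariance alone --- is not something the paper attempts, and is likely not even true without further hypotheses; you can drop it.

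The genuine gap is in the exchangeability step. You correctly diagnose the difficulty: the past block $\kappa(-1),\dots,\kappa(-T)$ is a forward i.i.d.\ family seen through a reindexing that depends on those same fields, and conditioning on the \emph{present} $(\xi,Q)$ rather than the time-$(-T)$ state breaks the obvious symmetry. What you have not articulated is the idea that resolves it: the composite reindexing permutation --- and with it the entire evolved state --- depends on the $T$ forward fields only through the per-particle sums $S_i(T-1):=\sum_{t=0}^{T-1}\psi(\kappa_i(t))$. The paper packages this as the $\sigma$-field identity
\[
\sigma\Bigl((\xi,Q),\ (S_i(T-1))_i\Bigr)\ =\ \sigma\Bigl(\Phi_{T-1}\circ\cdots\circ\Phi_0(\xi,Q),\ (\tilde S_j(T-1))_j\Bigr),
\]
where $\tilde S$ is $S$ reindexed to the evolved frame. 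Since i.i.d.\ variables are automatically exchangeable conditional on their own sum, the fields are exchangeable given the left-hand $\sigma$-field; the identity transports this to the right-hand side; integrating out $\tilde S$ yields exchangeability given the evolved $(\xi,Q)$ alone; and $\Lambda$-invariance then identifies the evolved state with the present one. Your ``finite-dimensional computation tracking the reindexing permutations'' would, if carried through, rediscover this sum-only dependence --- but as written it is a promise rather than an argument, and that dependence is precisely the missing insight you flag as the hard part without supplying.
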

From now on, we will also write $\prob_r$ for the extension of the probability measure \eqref{forw} to $\Omega$. 
\begin{df}
The past velocity of the $i$-th point is the time-average of its past increments i.e. for $\omega\in\Omega$
\be
v_i(\omega):=\lim_{T\to\infty}\ \frac{1}{T}\sum_{t=1}^T\psi(\kappa_i(-t)) .
\label{df velocity}
\ee
\end{df}
It is important to bear in mind that the velocity is in essence very different from the time-average of the future increments due to the reordering. Indeed, the $i$-th point moved in front of all but $i-1$ points during the course of the competitive evolution. Thus its past increments are by nature atypical.
The existence of the limit \eqref{df velocity} is a simple consequence of the exchangeability of the increments.
\begin{prop}
Let $(\xi,Q)$ be a quasi-stationary ROSt under $\Phi_r$ for some $r\in\N$. For all $i\in\N$, the limit $v_i(\omega)$ exists $\prob_r$-a.s. and $v_i(\omega)\in L^p(\prob_r)$ for any $1\leq p<\infty$. 

Moreover, the velocity is an intrinsic quantity of a particle in the sense that
\begin{equation}
v_i(\omega)=v_{\pi(i)}(\Lambda\omega)
\label{evol veloc}
\end{equation}
where $\pi$ is the permutation induced by the evolution $\omega\mapsto \Lambda \omega$.
\label{veloc exist}
\end{prop}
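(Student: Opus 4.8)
The plan is to obtain both assertions from the conditional exchangeability of the past increments provided by Lemma~\ref{lem exist}, combined with the pointwise ergodic theorem for stationary sequences. The reordering built into $\Lambda$ forbids a direct interpretation of $v_i$ as a $\Lambda$-ergodic average (as noted in the discussion preceding the statement), so the argument must be run on the raw sequence of past increments of a fixed particle rather than on $\Lambda$ itself.

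First I would prove existence and integrability. Conditionally on $(\xi,Q)$ the sequence of $\R^\N$-valued increments $(\kappa(-t),\,t\geq 1)$ is exchangeable by Lemma~\ref{lem exist}, hence conditionally stationary, hence stationary under $\prob_r$. Applying the ergodic theorem to the one-sided shift on this stationary sequence with the coordinate functional $\zeta\mapsto\psi(\zeta_i)$, the average $\frac1T\sum_{t=1}^T\psi(\kappa_i(-t))$ converges $\prob_r$-a.s.\ and in $L^1(\prob_r)$; its limit is by definition $v_i(\omega)$. For the $L^p$ control, observe that since $q_{ii}=1$ we have $(Q^{*r})_{ii}=1$, so $\kappa_i(t)$ is marginally a standard gaussian variable; as $\psi\in C^2(\R)$ has bounded derivatives it grows at most linearly, whence $\E_r\!\left[|\psi(\kappa_i(-t))|^p\right]$ is finite and uniform in $t$. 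By the triangle inequality the Cesàro averages are bounded in $L^p(\prob_r)$ uniformly in $T$, and uniform $L^{p+1}$ boundedness makes their $p$-th powers uniformly integrable; combined with the a.s.\ convergence this gives $v_i\in L^p(\prob_r)$ for every $1\leq p<\infty$, with convergence also in $L^p$.

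It remains to check the transport identity \eqref{evol veloc}. Unwinding the definition of $\Lambda$, the gaussian field of $\Lambda\omega$ at time $t$ is $\kappa_\downarrow(t+1)$, where the reindexing $\downarrow$ is by the permutation $\pi$ induced by $\Phi_{\psi(\kappa(0))}$; with the labelling convention of \eqref{evol veloc}, the $\pi(i)$-th coordinate of $\kappa_\downarrow(s)$ is $\kappa_i(s)$. Therefore
\begin{align*}
v_{\pi(i)}(\Lambda\omega)
&=\lim_{T\to\infty}\frac1T\sum_{t=1}^T\psi\big(\kappa_\downarrow(-t+1)_{\pi(i)}\big)
=\lim_{T\to\infty}\frac1T\sum_{t=1}^T\psi\big(\kappa_i(-t+1)\big)\\
&=\lim_{T\to\infty}\frac1T\Big(\psi(\kappa_i(0))+\sum_{s=1}^{T-1}\psi(\kappa_i(-s))\Big)=v_i(\omega),
\end{align*}
since $\frac1T\psi(\kappa_i(0))\to 0$ and the remaining sum is exactly the average defining $v_i(\omega)$; in particular the limit on the left-hand side exists.

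I expect the only delicate point to be the first step: verifying that conditional exchangeability genuinely upgrades to stationarity of the past-increment sequence under $\prob_r$ so that a pointwise ergodic theorem is legitimately applicable, and identifying its limit with the per-particle quantity $v_i$. The $L^p$ estimate and the bookkeeping behind \eqref{evol veloc} are then routine.
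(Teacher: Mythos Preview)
Your argument for existence via the ergodic theorem and your explicit bookkeeping for \eqref{evol veloc} are fine in spirit, but there is a genuine error in the moment step, and it propagates back to the $L^1$ integrability you need before you can even invoke Birkhoff.

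The claim that $\kappa_i(-t)$ is marginally standard gaussian for $t\geq 1$ is false. Your reasoning ``$q_{ii}=1$, hence $(Q^{*r})_{ii}=1$'' is valid for the \emph{future} increments $\kappa_i(t)$, $t\geq 0$, which are sampled directly from $\nu_{Q^{*r}}$. The past increments are not obtained this way: by the construction of the extension in Lemma~\ref{lem exist}, the law of $\kappa(-1)$ under $\prob_r$ is that of $\kappa_\downarrow(0)$, i.e.\ the field $\kappa(0)$ reindexed by the permutation $\pi$ induced by $\Phi_{\psi(\kappa(0))}$. Since $\pi$ depends on $\kappa(0)$ itself, the coordinate $\kappa_{\pi^{-1}(i)}(0)$ is biased; for instance, the last increment of the current leader is skewed toward large values. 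Consequently you cannot read off moments of $\psi(\kappa_i(-1))$ from those of a standard gaussian, and your uniform $L^p$ and $L^{p+1}$ bounds on the Ces\`aro averages are unjustified.

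The paper handles this by working conditionally. De Finetti applied to the exchangeable sequence $(\kappa(-t))_{t\geq 1}$ yields that, given $(\xi,Q)$ and the empirical measure $\alpha$ of the past increments, these fields are iid; Lemma~\ref{lem2 exist} supplies the \emph{conditional} bound $\E_r[\,|\psi(\kappa_i(-1))|\mid \xi,Q,\alpha]<\infty$ a.s., after which the strong law of large numbers gives the a.s.\ limit $v_i$. The $L^p$ membership then follows from Jensen, Fatou and exchangeability, with the finiteness of $\E_r[|\psi(\kappa_i(-1))|^p]$ traced back to the $\xi$-weighted estimate in the proof of Lemma~\ref{lem2 exist}. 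If you want to keep your ergodic-theorem route, you must first establish $\psi(\kappa_i(-1))\in L^1(\prob_r)$ by a correct argument, and that will essentially require the machinery of Lemma~\ref{lem2 exist} rather than a direct marginal computation.
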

\begin{proof}
By de Finetti's theorem and the exchangeability of the past increments asserted in Lemma \ref{lem exist}, the fields $(\kappa(t),t<0)$ are iid given $(\xi,Q)$ and $\alpha$, the empirical distribution of $(\kappa(t), t<0)$. On the other hand, it is proven in  Lemma \ref{lem2 exist} of Appendix \ref{app ext} that $\E_{r}\big[ \ |\psi(\kappa_i(-1))| \ \big|\xi,Q,\alpha\big]<\infty$ a.s. Thus the first claim follows by the law of large numbers. Second, by a combination of Jensen's inequality, Fatou's lemma and exchangeability, we have
$$\E_r[ \ |v_i(\omega)|^p \ ]\leq \E_{r}\left[|\psi(\kappa_i(-1))|^p\right].$$
which is also finite by the proof of Lemma \ref{lem2 exist}. 
The equality \eqref{evol veloc} is clear as the past velocity depends only on increments in the distant past.
\end{proof}

\subsection{The velocity is common}
We now make rigorous the intuitive idea that the points must share a common velocity for the system to be stable.
\begin{prop}
If $(\xi,Q)$ is a quasi-stationary ROSt under $\Phi_r$ for all functions $\lambda\psi$, $\lambda$ in some open set of $\R$. Then $v_{i}(\omega)\equiv v(\omega)$
for all $i\in\N$ $\prob_r$-a.s. 

If it is ergodic, then the past velocity is deterministic and
$$ v(\omega)=\E_{r}\left[\sum_i\xi_i\text{ } \psi(\kappa_i(-1))\right]\text{  $\prob_{r}$-a.s.}.$$
\label{prop common}
\end{prop}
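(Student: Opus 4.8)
The plan is to organise the argument around the de Finetti description of the velocity. Write $\mathcal{G}$ for the $\sigma$-field generated by $(\xi,Q)$ and by the empirical measure $\alpha$ of the past increments. By Lemma \ref{lem exist} the fields $(\kappa(t),t<0)$ are exchangeable given $(\xi,Q)$, so by de Finetti's theorem they are i.i.d.\ given $\mathcal{G}$; combined with the integrability used in the proof of Proposition \ref{veloc exist}, the strong law of large numbers gives
\[ v_i(\omega)=\E_r[\psi(\kappa_i(-1))\mid\mathcal{G}]\qquad\prob_r\text{-a.s.}\]
for every $i$. Thus each $v_i$ is $\mathcal{G}$-measurable and integrable, and by Proposition \ref{veloc exist} it is intrinsic, $v_i(\omega)=v_{\pi(i)}(\Lambda\omega)$. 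It therefore suffices (i) to prove that $\{v_i(\omega)\}_i$ is $\prob_r$-a.s.\ constant, and then (ii) to identify the common value when the system is ergodic.

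Part (ii) is quick once (i) is known. If $v_i(\omega)\equiv v(\omega)$, then since every velocity of $\Lambda\omega$ equals $v(\Lambda\omega)$, the relation $v_i(\omega)=v_{\pi(i)}(\Lambda\omega)$ gives $v(\omega)=v(\Lambda\omega)$ $\prob_r$-a.s.; hence $v$ is $\Lambda$-invariant, and by ergodicity it is $\prob_r$-a.s.\ equal to the constant $\E_r[v]$. Using $\sum_i\xi_i=1$, the displayed de Finetti identity, and the $\mathcal{G}$-measurability of the $\xi_i$,
\[ v(\omega)=\sum_i\xi_i\,v_i(\omega)=\E_r\Big[\sum_i\xi_i\,\psi(\kappa_i(-1))\ \Big|\ \mathcal{G}\Big]\qquad\prob_r\text{-a.s.},\]
and taking $\E_r$ yields $v=\E_r[\sum_i\xi_i\psi(\kappa_i(-1))]$. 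The only points needing care here are that $\sum_i\xi_i\psi(\kappa_i(-1))\in L^1(\prob_r)$, which follows from the $L^p$-bounds of Proposition \ref{veloc exist} together with $\sum_i\xi_i=1$, and that the de Finetti representation holds for all $i$ at once, which is automatic.

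The substance of the proposition, and the step I expect to be the main obstacle, is (i): that the velocities coincide. The heuristic is that if two groups of particles, each carrying positive weight, had different time-averaged increments, then running the evolution far into the past the faster group would have had to rise up from arbitrarily deep inside the configuration, which is incompatible with the stationary statistics being valid at every negative time. To make this quantitative I would first telescope the weights along $T$ past applications of $\Lambda$,
\[ \sum_{t=1}^{T}\psi(\kappa_i(-t))=\log\xi_i-\log\xi^{(-T)}_{a_T(i)}+\sum_{t=1}^{T}\log\mathcal{Z}^{(-t)},\]
where $a_T(i)$ is the (unique) ancestor of $i$ in $\Lambda^{-T}\omega$, $\xi^{(-T)}$ its weight sequence, and $\mathcal{Z}^{(-t)}$ the normalisation used at the $t$-th step in the past. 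Dividing by $T$, the term $T^{-1}\log\xi_i$ is negligible and Birkhoff's ergodic theorem for the (invertible) $\prob_r$-preserving map $\Lambda$ makes $T^{-1}\sum_t\log\mathcal{Z}^{(-t)}$ converge to a limit that does not depend on $i$, so that $v_i(\omega)-v_j(\omega)=\lim_T T^{-1}\log\big(\xi^{(-T)}_{a_T(j)}/\xi^{(-T)}_{a_T(i)}\big)$. The plan is then to exploit quasi-stationarity under the whole family $\lambda\psi$, $\lambda$ in the given open interval: the function $\varphi(\lambda):=\E\big[\log\sum_i\xi_i e^{\lambda\psi(\kappa_i(0))}\big]$ (which involves only a single fresh increment) is smooth and convex, its derivative satisfies $\lambda\varphi'(\lambda)=\E_r^{(\lambda)}[\sum_i\xi_i\,\lambda\psi(\kappa_i(-1))]$ by $\Lambda$-invariance, and matching this per-step mean against the Cesàro averages of the increments, simultaneously for all $\lambda$ in the interval, should leave no room for a non-degenerate spread of $\{v_i^{(\lambda)}\}_i$ — equivalently, it should force the weighted velocity measure $\sum_i\xi_i\delta_{v_i^{(\lambda)}}$ to be a point mass. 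The hard part, and the reason the one-parameter family rather than a single $\psi$ appears in the hypothesis, is precisely this last step: one needs a quantitative bound on how slowly $\xi^{(-T)}_{a_T(i)}$ can decay along the ancestral line of a fixed present particle, uniformly in $i$, and one must propagate it across the interval of $\lambda$'s. The remaining interchanges — dominated convergence for the Cesàro averages and differentiation under the integral in $\lambda$ — are routine given the integrability supplied by Proposition \ref{veloc exist}.
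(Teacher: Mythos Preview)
Your treatment of part (ii) is fine and essentially matches the paper: once $v_i\equiv v$, invariance $v(\omega)=v(\Lambda\omega)$ plus ergodicity gives a constant, and your identification of the constant via $\sum_i\xi_i v_i$ is a clean variant of the paper's dominated-convergence argument.

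Part (i), however, is not proved. Your telescope
\[
\sum_{t=1}^T\psi(\kappa_i(-t))=\log\xi_i-\log\xi^{(-T)}_{a_T(i)}+\sum_{t=1}^T\log\mathcal{Z}^{(-t)}
\]
is correct, and you correctly isolate the difficulty: controlling $T^{-1}\log\xi^{(-T)}_{a_T(i)}$. But you then only sketch a hope (``matching the per-step mean against the Ces\`aro averages simultaneously for all $\lambda$ should leave no room\ldots''), explicitly flag the missing uniform bound on ancestor weights, and stop. Nothing in what you wrote actually forces $\sum_i\xi_i\delta_{v_i}$ to be a Dirac mass; stationarity of $\xi^{(-T)}$ alone does not prevent $a_T(i)$ from drifting to infinity at linear rate in $T$ for some $i$ and not others.

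The paper closes the gap with a single computation you were one step away from. You already introduced $\varphi(\lambda)=\mathcal{P}_r(\lambda)=\E_r[\log\sum_i\xi_i e^{\lambda\psi(\kappa_i(0))}]$. The point you missed is that quasi-stationarity gives, for every $T$,
\[
\mathcal{P}_r(\lambda)=\frac{1}{T}\,\E_r\Big[\log\sum_i\xi_i\,e^{\lambda\sum_{t=0}^{T-1}\psi(\kappa_i(t))}\Big],
\]
and differentiating this twice in $\lambda$ (the open interval of $\lambda$'s is exactly what makes the derivatives available) yields
\[
\frac{1}{T}\,\mathcal{P}_r''(\lambda)=\E_r\Big[\sum_i\xi_i\big(S_i(T;\omega)-\langle S(T)\rangle_\omega\big)^2\Big],
\]
with $S_i(T)=T^{-1}\sum_{t=1}^T\psi(\kappa_i(-t))$. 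Since $\mathcal{P}_r''(\lambda)$ is finite and independent of $T$, the right-hand side tends to $0$; along a subsequence the squared deviations go to $0$ a.s., and since $S_i(T_n)\to v_i$ this forces all $v_i$ to coincide with $\lim_n\langle S(T_n)\rangle_\omega$. No ancestral-weight estimate is needed. Your telescope is thus a detour; the second-derivative identity for $\mathcal{P}_r$ is the missing idea.
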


Before proving the proposition, we need to introduce the generating function of the cumulants of the past increments. 
Let $(\xi,Q)$ be a ROSt. For $\lambda\in\R$, we set
\be
 \Ppar(\lambda):=\E_r\left[\log \sum_i \xi_i e^{\lambda\psi(\kappa_i(0))}\right].
 \label{df P}
\ee
$\Ppar(\lambda)$ is well-defined in the case $\psi$ satisfies Assumption \ref{intro ass} since by Jensen's inequality
$$0 \leq \Ppar(\lambda)\leq \log \int_\R \frac{e^{-z^2/2}}{\sqrt{2\pi}}\text{ }e^{\lambda \psi(z)}dz\ .$$
In the case where $(\xi,Q)$ is quasi-stationary, we have for all $T\in\N$
\begin{equation}
\Ppar(\lambda)=\frac{1}{T}\E_r\left[\log \sum_i \xi_i e^{\lambda\sum_{t=0}^{T-1} \psi(\kappa_i(t))}\right].
\label{stat P}
\end{equation}

The function $\Ppar(\lambda)$ is a good tool to compare the past increments of a point $i$ with the $\xi$-averaged increment of the crowd.
\begin{lem}
Let $(\xi,Q)$ be a quasi-stationary ROSt under $\Phi_r$ for all functions $\lambda\psi$, $\lambda$ in some open set of $\R$. Define $S_i(T;\omega):=\frac{1}{T}\sum_{t=1}^{T}\psi(\kappa_i(-t))$ and $\langle S(T)\rangle_\omega:=\sum_i\xi_i\text{ }S_i(T;\omega)$. 
Then for all $T\in\N$
$$\frac{d}{d\lambda}\Ppar(\lambda)=\E_{r}\left[\langle S(T)\rangle_\omega\right]=\E_{r}\left[\sum_i \xi_i \text{ }\psi(\kappa_i(-1))\right]$$
and 
$$\frac{1}{T}\frac{d^2}{d\lambda^2}\Ppar(\lambda)=\E_{r}\left[\sum_i \xi_i \left(S_i(T;\omega)-\langle S(T)\rangle_\omega\right)^2\right].$$
In particular,
\begin{equation}
\lim_{T\to\infty}\E_{r}\left[\sum_i \xi_i \left(S_i(T;\omega)-\langle S(T)\rangle_\omega\right)^2\right]=0 \ .
\label{eqn moments}
\end{equation}
\label{moments}
\end{lem}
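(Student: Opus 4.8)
\emph{Plan.} The idea is to differentiate the free‑energy identity \eqref{stat P} twice in $\lambda$, recognise the resulting Gibbs averages — after one application of the time‑shift $\Lambda^T$ — as the weighted past‑increment statistics in the statement, and then use the $\Lambda$‑invariance and conditional exchangeability of Lemma \ref{lem exist} to replace them by the asserted expectations; the boundedness of $\Ppar$ then disposes of the $1/T$ prefactor.

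\emph{Step 1: regularity of $\Ppar$.} Under Assumption \ref{intro ass} one has $|\psi(z)|\le|\psi(0)|+C|z|$, so the Gaussian integrals of $e^{\lambda\psi}$, $\psi e^{\lambda\psi}$ and $\psi^2 e^{\lambda\psi}$ converge locally uniformly in $\lambda$; together with the a priori bound $0\le\Ppar(\lambda)\le\log\int (2\pi)^{-1/2}e^{-z^2/2}e^{\lambda\psi(z)}\,dz$ noted after \eqref{df P}, this justifies differentiating under $\E_r$ and under the sum over $i$, so $\Ppar$ is $C^2$ on the open interval where quasi‑stationarity is assumed, with finite derivatives there. (Fix such a $\lambda$; Lemma \ref{lem exist} applied to the function $\lambda\psi$ produces the $\Lambda$‑invariant extension $\prob_r$ on $\Omega$ used throughout.)

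\emph{Step 2: first derivative.} Write $U_j:=\sum_{t=0}^{T-1}\psi(\kappa_j(t))$ and $w_j:=\xi_j e^{\lambda U_j}/\sum_k\xi_k e^{\lambda U_k}$. Differentiating \eqref{stat P} gives $\tfrac{d}{d\lambda}\Ppar(\lambda)=\tfrac1T\E_r\big[\sum_j w_j U_j\big]$. Iterating the map $\Lambda=\Lambda_{\lambda\psi}$ of \eqref{evolution rmpr1} a total of $T$ times: almost surely the reorderings are well defined, the weight of particle‑track $j$ after $T$ steps telescopes to exactly $w_j$, and the $T$ past increments of the point occupying position $i$ in $\Lambda^T\omega$ are the values $\psi(\kappa_{\sigma(i)}(0)),\dots,\psi(\kappa_{\sigma(i)}(T-1))$ of its track $\sigma(i)$, where $\sigma$ is the composed reindexing. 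Hence $\sum_i\xi_i(\Lambda^T\omega)\,S_i(T;\Lambda^T\omega)=\tfrac1T\sum_j w_j U_j$, so $\tfrac{d}{d\lambda}\Ppar(\lambda)=\E_r[\langle S(T)\rangle_{\Lambda^T\omega}]=\E_r[\langle S(T)\rangle_\omega]$ by $\Lambda$‑invariance. Taking $T=1$ (or, for general $T$, using that the conditional exchangeability of the past increments makes $\E_r[\sum_i\xi_i\psi(\kappa_i(-t))]$ the same for every $t$) identifies this with $\E_r[\sum_i\xi_i\psi(\kappa_i(-1))]$.

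\emph{Step 3: second derivative and the limit.} Since $\partial_\lambda w_j=w_j(U_j-\bar{U})$ with $\bar{U}:=\sum_k w_k U_k$, one gets the elementary identity $\tfrac{d}{d\lambda}\sum_j w_j U_j=\sum_j w_j(U_j-\bar{U})^2$, so $\tfrac{d^2}{d\lambda^2}\Ppar(\lambda)=\tfrac1T\E_r\big[\sum_j w_j(U_j-\bar{U})^2\big]$. Translating through $\Lambda^T$ exactly as in Step 2, $\sum_j w_j(U_j-\bar{U})^2=T^2\sum_i\xi_i(\Lambda^T\omega)\big(S_i(T;\Lambda^T\omega)-\langle S(T)\rangle_{\Lambda^T\omega}\big)^2$, and $\Lambda$‑invariance yields $\tfrac1T\tfrac{d^2}{d\lambda^2}\Ppar(\lambda)=\E_r\big[\sum_i\xi_i(S_i(T;\omega)-\langle S(T)\rangle_\omega)^2\big]$. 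By Step 1 the left‑hand side is $(1/T)$ times a finite number independent of $T$, so letting $T\to\infty$ gives \eqref{eqn moments}. The one genuinely delicate point is the bookkeeping in Steps 2–3: one must check that the multiplicative cocycle for the weights and the permutation $\sigma$ built from $T$ successive reorderings compose so that the Gibbs average over the future increments of $\omega$ is literally the $\xi$‑weighted average over the past increments of $\Lambda^T\omega$; the differentiation under the integral, the variance identity, and the $1/T$ limit are then routine.
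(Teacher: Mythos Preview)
Your proof is correct and follows the same strategy as the paper: differentiate the quasi-stationary identity \eqref{stat P} in $\lambda$, then pass from the resulting Gibbs average over the future increments $\kappa(0),\dots,\kappa(T-1)$ to the $\xi$-weighted average over the past increments $\kappa(-1),\dots,\kappa(-T)$ via $\Lambda$-invariance, and finally use exchangeability to collapse to $t=-1$. The paper's proof compresses all of this into the phrase ``taking derivatives on both sides of \eqref{stat P}'' together with a reference to exchangeability; you have made explicit the one nontrivial bookkeeping step (the cocycle identity $\sum_j w_jU_j=T\sum_i\xi_i(\Lambda^T\omega)S_i(T;\Lambda^T\omega)$ and its squared-deviation analogue), which the paper leaves to the reader.
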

\begin{proof}
The two expression of the derivatives are obtained by simply taking derivatives in $\lambda$ on both sides of equation \eqref{stat P}. The condition that quasi-stationarity holds for $\lambda$ in an open set of $\R$ is necessary for the identity \eqref{stat P} to hold in a neighborhood of the point where the derivative is taken. Linearity of expectation and the exchangeability of the increments yield the second equality for $\frac{d}{d\lambda}\Ppar(\lambda)$. 
The limit $T\to\infty$ follows directly from the fact that $\Ppar(\lambda)$ has a finite second derivative.
\end{proof}

\begin{proof}[Proof of Proposition \ref{prop common}]
We claim that there exists a sequence $T_n\in\N$ such that for all $i\in\N$ as $n\to\infty$ 
\begin{equation}
\big|S_i(T_n;\omega)-\langle S(T_n)\rangle_\omega\big|\to  0 \text{  $\prob_{r}$-a.s.}
\label{common eqn}
\end{equation}
Indeed, it follows from equation \eqref{eqn moments} of Lemma \ref{moments} that as $T\to\infty$
$$\sum_i \xi_i \left(S_i(T;\omega)-\langle S(T)\rangle_\omega\right)^2\to  0 \text{  in $L^1(\prob_{r})$}\ .$$
This ensures the existence of the subsequence for which the convergence \eqref{common eqn} holds $\prob_r$-a.s. for all $i\in\N$.
On this subsequence, we also have that $S_i(T_n;\omega)\to v_i(\omega)$ a.s. for every $i\in\N$ by Proposition \ref{veloc exist}.
We conclude that $v_i(\omega)=\lim_{n\to\infty}\langle S(T_n)\rangle_\omega$ a.s. The first part of the proposition is proven.
 
As the past velocity is common, the following equality holds by equation \eqref{evol veloc}
$$ v(\omega)=\frac{1}{T}\sum_{t=0}^{T-1}v(\Lambda^t \omega)\ .$$
We now take the limit $T\to\infty$. Birkhoff's ergodic theorem can be applied as $v(\omega)\in L^1(\prob_{r})$ and we conclude that $v(\omega)=\E_{r}[v(\omega)]$ $\prob_r$-a.s. whenever $(\xi,Q)$ is ergodic.
Furthermore, by dominated convergence,
$$ \E_{r}[v(\omega)]=\E_{r}\left[\lim_{n\to\infty}\langle S{(T_n)}\rangle_\omega\right]=\E_{r}\left[\sum_i\xi_i\text{ }\psi(\kappa_i(-1))\right]$$
where we have used the expression of the first moment in Lemma \ref{moments}.
\end{proof}

\subsection{Velocity and Decomposability}
The velocity and the generating function $\Ppar(\lambda)$ take a simple form when the evolution $\Phi_r$ is governed by a linear function.
\begin{lem}
Let $(\xi,Q)$ be a quasi-stationary ROSt under the evolution $\Phi_r$ for all linear functions $\psi(\kappa)=\lambda\kappa$, $\lambda$ in some open set of $\R$. One has
\begin{equation}
\mathcal{P}_r(\lambda)=\frac{\lambda^2}{2}\int_{-1}^1(1-q^r)\text{ }dx(q)
\end{equation}
where $x(q)$ is the $\xi$-sampled distribution function $\E\left[\sum_{i,j}\xi_{i}\xi_{j} \ \chi_{\{q_{ij}\leq q\}}\right]$. 

In particular, if $(\xi,Q)$ is ergodic then
\begin{equation}
v(\omega)=\lambda\int_{-1}^1(1-q^r)\text{ }dx(q) \ \text{ $\prob_{r}$-a.s.}
\label{eqn velocity}
\end{equation}
\label{ppar}
\end{lem}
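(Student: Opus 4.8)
The plan is to obtain $\mathcal{P}_r(\lambda)$ by first computing its derivative in closed form via Gaussian integration by parts and then using quasi-stationarity to recognize that the resulting expression does not depend on $\lambda$. Here the increment function is linear, so in \eqref{df P} the function $\psi$ is the identity and $\mathcal{P}_r(\lambda)=\E_r[\log\sum_i\xi_i e^{\lambda\kappa_i(0)}]$; differentiating in $\lambda$ — legitimate since $\mathcal{P}_r$ is twice continuously differentiable, as already used in Lemma \ref{moments} — gives $\mathcal{P}_r'(\lambda)=\E_r[\sum_i\kappa_i(0)\,\mu_i]$, where $\mu_i$ denotes the tilted weight proportional to $\xi_i e^{\lambda\kappa_i(0)}$, normalized so that $\sum_i\mu_i=1$. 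I would then condition on $(\xi,Q)$ and apply Wick's identity $\E[\kappa_i(0)F]=\sum_k q_{ik}^r\,\E[\partial_{\kappa_k(0)}F]$ (recall $(Q^{*r})_{ik}=q_{ik}^r$) to $F=\mu_i$; using $\partial_{\kappa_k(0)}\mu_i=\lambda(\delta_{ik}\mu_i-\mu_i\mu_k)$, $q_{ii}=1$, and $\sum_i\mu_i=1$, summing over $i$ collapses everything to
\[
\mathcal{P}_r'(\lambda)=\lambda\,\E_r\Big[\sum_{i,k}(1-q_{ik}^r)\,\mu_i\mu_k\Big].
\]
The interchange of the derivative with the two sums and the passage to the infinite field here are routine, since $\sum_k|q_{ik}^r|\mu_i\mu_k\le\mu_i$ makes the series absolutely convergent and Wick's formula extends by truncation; note that this identity holds for an arbitrary ROSt, quasi-stationary or not.

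Quasi-stationarity enters at the next step: the decreasingly reordered tilted weights $\mu_\downarrow$, together with the correspondingly reshuffled overlap matrix, are exactly $\Phi_r(\xi,Q)$ for the linear increment $\lambda\kappa(0)$, and the bilinear functional $(s,Q)\mapsto\sum_{i,k}(1-q_{ik}^r)s_is_k$ is invariant under a simultaneous permutation of the weights and of $Q$. Hence
\[
\E_r\Big[\sum_{i,k}(1-q_{ik}^r)\,\mu_i\mu_k\Big]=\E\Big[\sum_{i,k}(1-q_{ik}^r)\,\xi_i\xi_k\Big]=\int_{-1}^1(1-q^r)\,dx(q),
\]
the last equality being just the definition of the $\xi$-sampled distribution $x$. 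The right-hand side carries no $\lambda$, so $\mathcal{P}_r'(\lambda)=\lambda\int_{-1}^1(1-q^r)\,dx(q)$ for every $\lambda$ in the open set where quasi-stationarity holds; integrating and using $\mathcal{P}_r(0)=\E_r[\log\sum_i\xi_i]=0$ (a quasi-stationary ROSt has $\sum_i\xi_i=1$ a.s.) yields the formula for $\mathcal{P}_r(\lambda)$, the additive constant vanishing either because $0$ belongs to that set or, more generally, by the scaling identity $\mathcal{P}_r(\sqrt{T}\lambda)=T\mathcal{P}_r(\lambda)$, $T\in\N$, which \eqref{stat P} yields for linear increments (a sum of $T$ independent copies of $\kappa(0)$ having the law of $\sqrt{T}\,\kappa(0)$). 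For the velocity, I would combine Lemma \ref{moments}, which identifies $\mathcal{P}_r'(\lambda)$ with $\E_r[\sum_i\xi_i\psi(\kappa_i(-1))]$, with Proposition \ref{prop common}, which in the ergodic case identifies that same quantity with the deterministic common velocity $v(\omega)$; this gives $v(\omega)=\mathcal{P}_r'(\lambda)=\lambda\int_{-1}^1(1-q^r)\,dx(q)$ $\prob_r$-a.s.

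The step I expect to demand the most care is the rigorous justification of differentiation under the $(\xi,Q)$-expectation together with the infinite-dimensional Gaussian integration by parts — in particular commuting the $\lambda$-derivative past the two infinite sums and past the ROSt expectation — along with the purely notational but error-prone bookkeeping that the reordering permutation induced by $\Phi_r$ leaves the form $\sum_{i,k}(1-q_{ik}^r)s_is_k$ unchanged. Once those are in place, the rest is a one-line integration and a direct appeal to the earlier results.
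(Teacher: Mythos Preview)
Your proposal is correct and follows essentially the same approach as the paper: differentiate $\mathcal{P}_r(\lambda)$ via Gaussian integration by parts (what the paper calls the ``gaussian differentiation formula''), use quasi-stationarity to replace the tilted weights by $\xi$ so that the derivative becomes $\lambda\int(1-q^r)\,dx(q)$, integrate back, and then invoke Lemma~\ref{moments} and Proposition~\ref{prop common} for the velocity. Your write-up is more explicit about the Wick computation and the constant of integration (including the nice scaling observation), but the argument is the same.
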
 
\begin{proof}
We take the derivatives of \eqref{df P} using the gaussian differentiation formula (see e.g. Appendix A in \cite{AS22})
$$ 
\frac{d}{d\lambda}\mathcal{P}_r(\lambda)=\lambda\left(1-\E_{r}\left[\frac{\sum_{i,j}\xi_i\xi_j e^{\lambda\kappa_i(0)}e^{\lambda\kappa_j(0)}\text{ }q^r_{ij}}{\sum_{i,j}\xi_i\xi_j e^{\lambda\kappa_i(0)}e^{\lambda\kappa_j(0)}}\right]\right)
.$$
As the ROSt is quasi-stationary, the right-hand side simply becomes 
$$\lambda\left(1-\E\left[\sum_{i,j}\xi_i\xi_j \text{ }q^r_{ij}\right]\right)=\lambda\int_{-1}^1(1-q^r) \text{ }dx(q)\ .$$
Integration over $\lambda$ yields the first assertion.
The second is obtained from Lemma \ref{moments} and Proposition \ref{prop common}.
\end{proof}

We remark that the full collection of velocities of the evolutions $\Phi_r$, $r\in\N$, singles out the probability measure $dx(q)$ because it determines all the moments. This simple observation is applied to prove Theorem \ref{thm main1}.
\begin{proof}[Proof of Theorem \ref{thm main1}]
By Lemma \ref{lem reduction} proven in Appendix \ref{app var}, $(\xi,Q)$ must be quasi-stationary under the evolutions $\Phi_r$ for all linear functions and for all $r\in\N$. In particular, we can conclude from Theorem 4.4 in \cite{argaiz} that  the $Q$-factors of $(\xi,Q)$ must be RPC's. On the other hand, the velocities of each point must be common and deterministic by Proposition \ref{prop common}. In particular, the velocities of each $Q$-factor must correspond for every evolution $\Phi_r$. We deduce that the measure $dx(q)$ of each $Q$ factor is the same since the collection of velocities determines the moments by equation \eqref{eqn velocity}. Recall from Definition \ref{df rpc} that the parameter $x(q)$ characterizes the law of a RPC. We conclude that $(\xi,Q)$ has only one $Q$-factor and the claim follows.
\end{proof}


\section{The Distributional Identities}
We present the proof of Theorem \ref{thm main2} in this section. In essence, the Ghirlanda-Guerra identities follow from the fact that the velocity is common to all particles and deterministic when the system is ergodic under the considered evolutions. This property can be seen as the equivalent of the self-averaging of the free energy for spin glass models. 
As a first step, we remark that quasi-stationary systems satisfy the weaker Aizenman-Contucci identities which were derived prior to the Ghirlanda-Guerra identities for spin glasses \cite{AC}.

\subsection{The Aizenman-Contucci identities}
It is convenient to introduce a notation for the $\xi$-sampled measure on overlaps. Namely, let $F_s(q)$ be a bounded measurable function on the overlaps of $s$ points, we write
$$ \E^{(s)}[F_s(q)]:=\E\left[\sum_{i_1,...,i_s}\xi_{i_1}...\xi_{i_s}\ F_s(\{q_{i_l,i_{l'}}\}_{l<l'})\right]\ . $$
Plainly, such expectation is invariant under evolution for quasi-stationary ROSt's e.g. for linear $\psi$
\begin{equation}
\E\left[\frac{\sum_{i_1,...,i_s}\xi_{i_1}e^{\lambda\kappa_{i_1}}...\xi_{i_s}e^{\lambda\kappa_{i_s}}\text{ } F_s(q)}{\sum_{i_1,...,i_s}\xi_{i_1}e^{\lambda\kappa_{i_1}}...\xi_{i_s}e^{\lambda\kappa_{i_s}}}\right]=\E^{(s)}[F_s(q)].
\label{AC eqn}
\end{equation}
In particular, the right-hand side of the above equation does not depend on $\lambda$. This simple fact yields moment relations for quasi-stationary ROSt's.
\begin{prop}
Let $h\in\R$ be such that $\psi'(h)\neq 0$.
If $(\xi,Q)$ is a quasi-stationary ROSt under $\Phi_r$ with function $\psi(\beta\kappa+h)$ for all $\beta$ in an interval containing $0$, then for any $s\in\N$, its law satisfies
$$
\frac{s-1}{2}\ \E^{(2)}\left[q^{r}_{12}F_s(q)\right]=s\ \E^{(s+1)}\left[q^{r}_{s,s+1}F_s(q)\right]-\frac{s+1}{2}\ \E^{(s+2)}\left[q_{s+1,s+2}^{r}F_s(q)\right]
\label{eqn prop AC}
$$

\label{prop AC}
\end{prop}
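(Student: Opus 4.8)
The plan is to exploit the fact that the right-hand side of \eqref{AC eqn} is independent of the parameter of the evolution, so that its first and second derivatives with respect to that parameter vanish. I would work with the evolution governed by $\psi(\beta\kappa+h)$ and differentiate at $\beta=0$; because $\psi'(h)\neq 0$, the linear term in the Taylor expansion of $\psi(\beta\kappa+h)$ around $\beta=0$ is $\psi'(h)\beta\kappa$, so up to the nonzero constant $\psi'(h)$ the relevant small-$\beta$ behavior is that of a linear evolution $\lambda\kappa$ with $\lambda=\psi'(h)\beta$. Thus it suffices to prove the identity for linear $\psi(\kappa)=\lambda\kappa$ and then rescale; this reduction is exactly in the spirit of Appendix \ref{app var} and Lemma \ref{lem reduction}.

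First I would fix $s$ and the test function $F_s$, and consider the quantity
$$
G(\lambda):=\E\left[\frac{\sum_{i_1,\dots,i_s}\xi_{i_1}e^{\lambda\kappa_{i_1}}\cdots\xi_{i_s}e^{\lambda\kappa_{i_s}}\ F_s(q)}{\sum_{i_1,\dots,i_s}\xi_{i_1}e^{\lambda\kappa_{i_1}}\cdots\xi_{i_s}e^{\lambda\kappa_{i_s}}}\right],
$$
where $\kappa$ has covariance $Q^{*r}$. By \eqref{AC eqn}, quasi-stationarity forces $G(\lambda)\equiv\E^{(s)}[F_s(q)]$, hence $G'(0)=0$ and $G''(0)=0$. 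Next I would compute these derivatives using the gaussian integration-by-parts formula, exactly as in the proof of Lemma \ref{ppar}: each differentiation in $\lambda$ brings down a factor $\kappa_{i_l}$, and gaussian differentiation replaces it by $\lambda$ times a sum of overlaps $q^r_{i_l,i_{l'}}$ against the Gibbs-type average encoded in $G$. The first derivative $G'(\lambda)$ is a polynomial expression in $\lambda$ whose constant term (the $\lambda^0$ piece coming from differentiating the exponentials and then setting $\lambda=0$) vanishes automatically; the content is in $G''(0)$, or equivalently in the $\lambda^1$ coefficient of $G'(\lambda)$. Carrying this out, at $\lambda=0$ all the $e^{\lambda\kappa}$ collapse to $1$ and the $\xi$-weighted sums become plain $\E^{(\cdot)}$ expectations, and the combinatorics of which pair of replicas the two differentiations hit produces precisely the three terms in the claimed identity: the $\E^{(2)}[q^r_{12}F_s(q)]$ term from the two derivatives landing on the same "old" replica pair among $i_1,\dots,i_s$ together with the self-overlap normalization, the $\E^{(s+1)}[q^r_{s,s+1}F_s(q)]$ term from one derivative creating a new replica correlated with an old one, and the $\E^{(s+2)}[q^r_{s+1,s+2}F_s(q)]$ term from both derivatives producing the extra replica, with the combinatorial multiplicities $\tfrac{s-1}{2}$, $s$, $\tfrac{s+1}{2}$ emerging from counting ordered versus unordered pairs and the number of replicas available.

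The main obstacle I anticipate is bookkeeping rather than anything conceptual: one must carefully track, in $G''(0)$, every way the two $\lambda$-derivatives can act — on the numerator's exponentials, on the denominator's exponentials, or split between them — and verify that after using $q^r_{ii}=1$ and relabeling replicas the diagonal and cross terms reorganize into the stated combination with the correct signs and coefficients. A secondary technical point is justifying the interchange of differentiation and expectation and the applicability of gaussian integration by parts uniformly in a neighborhood of $\lambda=0$; this follows from the integrability bounds used for $\Ppar(\lambda)$ under Assumption \ref{intro ass} together with the boundedness of $F_s$, so it should go through with the same estimates already invoked in the proof of Lemma \ref{moments}. Once the linear case is settled, transferring back to $\psi(\beta\kappa+h)$ requires only that $\psi\in C^2$ with bounded derivatives and $\psi'(h)\neq 0$, so that the leading-order expansion in $\beta$ is a nondegenerate multiple of the linear evolution and the vanishing of $G''(0)$ in the linear variable is inherited.
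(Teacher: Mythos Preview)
Your proposal is correct and follows essentially the same route as the paper: reduce to the linear evolution $\lambda\kappa$ via Lemma~\ref{lem reduction}, use that quasi-stationarity forces the $\lambda$-dependence of the left-hand side of \eqref{AC eqn} to vanish, and extract the identity by gaussian integration by parts. The paper's proof is the same argument stated in one line (``straightforward gaussian differentiation with respect to $\lambda$ on both sides of \eqref{AC eqn}''), whereas you spell out that the first derivative carries an overall factor of $\lambda$ so the actual content sits in $G''(0)$ (equivalently, in $G'(\lambda)/\lambda$ at $\lambda=0$); this is exactly the computation the paper leaves implicit.
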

\begin{proof}
By Lemma \ref{lem reduction}, $(\xi,Q)$ is quasi-stationary under $\Phi_{r}$ for all linear functions $\lambda\kappa$ in an interval containing $0$. Therefore, equation \eqref{AC eqn} holds for these $\lambda$. Straightforward gaussian differentiation with respect to $\lambda$ on both sides of \eqref{AC eqn} yields the desired relation.
\end{proof}
The above is a slight generalization of the Aizenman-Contucci identities derived for mean-field spin glass models where $F_s$ is a polynomial \cite{AC}. It is a simple exercise to check that these identities are implied by the Ghirlanda-Guerra identities (see e.g. \cite{GG}).
Therefore, one could ask what extra condition should the system fulfill in order to satisfy the latter. It turns out that ergodicity suffices.

\subsection{The Ghirlanda-Guerra identities}
The key lemma used in the proof of our main result is a factorization of the expectation for observables of a specific form.
A similar factorization was used in the case of spin glass systems to prove the Ghirlanda-Guerra identities (see equation (12) in \cite{GG}).
\begin{lem}
Let $(\xi,Q)$ be a ROSt that is quasi-stationary and ergodic under $\Phi_r$ for all linear function $\psi(\kappa)=\lambda\kappa$ for $\lambda$ in an interval containing $0$. Consider $F_s(q)$ a bounded  function on the overlaps of $s$ points. Then the following holds
\begin{equation}
\E_r\left[\sum_{i_1,...,i_s}\xi_{i_1}...\xi_{i_s}\text{ } \kappa_{i_1}(-1)F_s(q)\right]
=\E_r\left[\sum_{i}\xi_{i}\text{ } \kappa_{i}(-1)\right]\E^{(s)}\left[F_s(q)\right].
\label{eqn factor}
\end{equation}
\label{factorization}
\end{lem}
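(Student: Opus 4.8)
The plan is to exploit the quasi-stationarity under the linear evolutions $\Phi_r$ with $\psi(\kappa)=\lambda\kappa$ together with the fact, established in Proposition \ref{prop common} and Lemma \ref{moments}, that the past velocity is common to all particles and deterministic under ergodicity. The quantity $\E_r\big[\sum_i \xi_i \kappa_i(-1)\big]=v(\omega)/\lambda$ is exactly the (normalized) common velocity, so the content of \eqref{eqn factor} is that inserting the extra factor $F_s(q)$ into the weighted sum of past increments simply multiplies by the $\xi$-sampled expectation $\E^{(s)}[F_s(q)]$. The natural route is to reintroduce $\lambda$: consider, for $\lambda$ in the open interval of quasi-stationarity, the $\lambda$-tilted version
$$
G_s(\lambda;T):=\frac{1}{T}\sum_{t=1}^{T}\E_r\!\left[\frac{\sum_{i_1,\dots,i_s}\xi_{i_1}e^{\lambda\sum_{u=1}^{T}\kappa_{i_1}(-u)}\cdots \xi_{i_s}e^{\lambda\sum_{u=1}^{T}\kappa_{i_s}(-u)}\ \kappa_{i_1}(-t)\,F_s(q)}{\Big(\sum_{i}\xi_i e^{\lambda\sum_{u=1}^{T}\kappa_i(-u)}\Big)^{s}}\right],
$$
which by quasi-stationarity (applied exactly as in \eqref{AC eqn} and \eqref{stat P}) is independent of $T$; at $\lambda=0$ it reduces to the left-hand side of \eqref{eqn factor} with a factor $1/T$ absorbing one sum, and the analogous $\lambda$-tilted version of the right-hand side is likewise $T$-independent and equals $\E_r[\sum_i\xi_i\kappa_i(-1)]\,\E^{(s)}[F_s(q)]$ at $\lambda=0$ by \eqref{AC eqn}.

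The core step is then to show these two $T$-independent quantities agree. First I would replace the single increment $\kappa_{i_1}(-t)$ by the time-average $S_{i_1}(T;\omega)=\frac{1}{T}\sum_{t=1}^T\kappa_{i_1}(-t)$, which is legitimate because, after averaging over $t$ and using exchangeability of the past increments (Lemma \ref{lem exist}), the expression with a single $t$ equals the one with the average. Next I would use the $L^2$-concentration \eqref{eqn moments}: since $S_{i_1}(T;\omega)\to v(\omega)=v$ with $\sum_i\xi_i(S_i(T)-\langle S(T)\rangle)^2\to 0$ in $L^1$, and $v$ is the deterministic constant $\E_r[\sum_i\xi_i\kappa_i(-1)]$ (times a trivial normalization; for linear $\psi$ Proposition \ref{prop common} gives $v=\E_r[\sum_i\xi_i\kappa_i(-1)]$), one can replace $S_{i_1}(T;\omega)$ by the constant $v$ up to an error controlled by Cauchy--Schwarz against the bounded observable $F_s$ and the probability weights $\xi_{i_1}\cdots\xi_{i_s}\le \xi_{i_1}$. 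Concretely,
$$
\left|\E_r\!\left[\sum_{i_1,\dots,i_s}\xi_{i_1}\cdots\xi_{i_s}\,\big(S_{i_1}(T;\omega)-v\big)F_s(q)\right]\right|
\le \|F_s\|_\infty\,\E_r\!\left[\sum_{i_1}\xi_{i_1}\big|S_{i_1}(T;\omega)-\langle S(T)\rangle_\omega\big|\right]+\|F_s\|_\infty\,\E_r\big[|\langle S(T)\rangle_\omega-v|\big],
$$
and both terms on the right tend to $0$ as $T\to\infty$ by \eqref{eqn moments} (via Jensen) and by Birkhoff/dominated convergence as in the proof of Proposition \ref{prop common}. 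Once $S_{i_1}(T;\omega)$ is pulled out as the constant $v$, the remaining sum is $v\cdot\E^{(s)}[F_s(q)]$ by quasi-stationarity of the $\xi$-sampled law, i.e. \eqref{AC eqn} with $\lambda=0$; passing to the limit in the $T$-independent quantity gives the identity on the nose.

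The main obstacle is the uniformity of the $L^2$-to-$L^1$ replacement argument when the extra bounded factor $F_s(q)$ and the extra $\xi$-weights $\xi_{i_2}\cdots\xi_{i_s}$ are present: one must be careful that summing $\big|S_{i_1}(T;\omega)-\langle S(T)\rangle_\omega\big|$ against $\xi_{i_1}$ alone (after bounding the other weights and $F_s$ by their suprema) still goes to zero, which follows from Cauchy--Schwarz $\sum_i\xi_i|a_i|\le(\sum_i\xi_i a_i^2)^{1/2}$ combined with \eqref{eqn moments}, but it must be checked that the $L^1$-convergence there survives the expectation $\E_r$; this is exactly the $L^1(\prob_r)$ statement already extracted in the proof of Proposition \ref{prop common}, so no new input is needed. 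A secondary technical point is justifying that the $\lambda$-tilted quantity $G_s(\lambda;T)$ is genuinely $T$-independent — this requires quasi-stationarity for all $\lambda$ in an open interval (so that \eqref{stat P} holds in a neighborhood), which is assumed — but we only need to evaluate at $\lambda=0$, so differentiation in $\lambda$ is not actually required here; the $T$-independence at $\lambda=0$ alone suffices once the concentration estimate is in hand.
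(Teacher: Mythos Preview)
Your proposal is correct and follows essentially the same route as the paper: replace $\kappa_{i_1}(-1)$ by the time-average $S_{i_1}(T;\omega)$ via exchangeability of the past increments, then use Proposition \ref{prop common} (the velocity is common and deterministic under ergodicity) together with the boundedness of $F_s$ to replace $S_{i_1}(T;\omega)$ by the constant $v=\E_r[\sum_i\xi_i\,\kappa_i(-1)]$ and pull it outside, leaving $v\cdot\E^{(s)}[F_s(q)]$. The $\lambda$-tilted quantity $G_s(\lambda;T)$ is unnecessary scaffolding---as you yourself observe, only $\lambda=0$ is needed and there the $T$-independence is immediate from exchangeability---so the paper's proof is just your middle paragraph, with the passage to the limit handled by dominated convergence rather than \eqref{eqn moments} plus Cauchy--Schwarz.
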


\begin{proof}
The exchangeability in time of the past increments yields
\begin{equation}
\E_{r}\left[\sum_{i_1,...,i_s}\xi_{i_1}...\xi_{i_s}\text{ } \kappa_{i_1}(-1)F_s(q)\right]=\E_{r}\left[\sum_{i_1,...,i_s}\xi_{i_1}...\xi_{i_s}\left(\frac{1}{T}\sum_{t=1}^T\kappa_{i_1}(-t)\right)F_s(q)\right]
\label{eqn factorization}
\end{equation}
for all $T\in\N$. Recall that $F_s$ is bounded, say $|F_s(q)|\leq C$ for some $C>0$, so
$$\Big|\E_{r}\left[\sum_{i_1,...,i_s}\xi_{i_1}...\xi_{i_s}\left(\frac{1}{T}\sum_{t=1}^T\kappa_{i_1}(-t)\right)F_s(q)\right]\Big|\leq C \E_{r}\left[\sum_i\xi_i\text{ }|\kappa_{i}(-1)|\right] $$
which is finite by Lemma \ref{lem2 exist}. Therefore we can take the limit $T\to\infty$ of equation \eqref{eqn factorization} and by dominated convergence we get
\begin{align*}
\lim_{T\to\infty}\E_{r}\left[\sum_{i_1,...,i_s}\xi_{i_1}...\xi_{i_s}\left(\frac{1}{T}\sum_{t=1}^T\kappa_{i_1}(-t)\right)F_s(q)\right]=\E_{r}\left[\sum_{i}\xi_{i}\text{ } \kappa_{i}(-1)\right]\E^{(s)}\left[F_s(q)\right]
\end{align*}
as the velocity is common and deterministic by Proposition \ref{prop common}.
\end{proof}
The next proposition claims the moment version of the Ghirlanda-Guerra identities under the stability hypothesis.
\begin{prop}
Let $\psi$, $F_s$ and $(\xi,Q)$ be as in Proposition \ref{prop AC}. If $(\xi,Q)$ is also ergodic under the considered evolutions, then for any $s\in\N$ its law satisfies
\begin{equation}
\E^{(s+1)}\left[q^r_{s,s+1}F_s(q)\right]=\frac{1}{s}\E^{(2)}[q_{12}^r]\E^{(s)}[F_s(q)]+\frac{1}{s}\sum_{l=1}^{s-1}\E^{(s)}[q^r_{ls}F_s(q)]
\label{eqn gg2}
\end{equation}
\end{prop}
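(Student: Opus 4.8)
The plan is to reproduce, in this setting, the derivation of the Ghirlanda--Guerra identities for mean-field spin glasses: the factorization of Lemma~\ref{factorization} (which encodes that the velocity is common and deterministic, hence uses ergodicity) plays the role of the self-averaging of the free energy, and Gaussian integration by parts extracts the overlaps. By Lemma~\ref{lem reduction} we may assume $(\xi,Q)$ is quasi-stationary and ergodic under $\Phi_r$ for the linear function $\psi(\kappa)=\lambda\kappa$, for every $\lambda$ in an interval $I\ni 0$; fix $\lambda\in I\setminus\{0\}$. Since the replica indices enter the $\xi$-weighting symmetrically, Lemma~\ref{factorization} applied with the distinguished replica taken to be the $s$-th one, together with the evaluation $\E_r[\sum_i\xi_i\kappa_i(-1)]=\frac{d}{d\lambda}\mathcal{P}_r(\lambda)=\lambda\int_{-1}^1(1-q^r)\,dx(q)=\lambda(1-\E^{(2)}[q_{12}^r])$ coming from Lemmas~\ref{moments} and~\ref{ppar}, gives
$$
\E_r\Big[\sum_{i_1,\dots,i_s}\xi_{i_1}\cdots\xi_{i_s}\,\kappa_{i_s}(-1)\,F_s(q)\Big]=\lambda\,\big(1-\E^{(2)}[q_{12}^r]\big)\,\E^{(s)}\big[F_s(q)\big].
$$

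Next I would compute this same expectation a second way, by turning the single past increment into a future one. By the $\Lambda$-invariance of $\prob_r$ (Lemma~\ref{lem exist}), $\E_r[G]=\E_r[G\circ\Lambda]$ for $G(\omega)=\sum_{i_1,\dots,i_s}\xi_{i_1}\cdots\xi_{i_s}\kappa_{i_s}(-1)F_s(q)$; since $\Lambda$ replaces $(\xi,Q)$ by the evolved pair, permutes the overlaps by the reordering $\pi$, and places the old bare time-$0$ field in the former time-$(-1)$ slot, the change of variables $j_l=\pi(i_l)$ removes $\pi$ entirely and yields
$$
\E_r\Big[\sum_{i_1,\dots,i_s}\xi_{i_1}\cdots\xi_{i_s}\,\kappa_{i_s}(-1)\,F_s(q)\Big]=\E_r\left[\frac{\sum_{j_1,\dots,j_s}\xi_{j_1}e^{\lambda\kappa_{j_1}(0)}\cdots\xi_{j_s}e^{\lambda\kappa_{j_s}(0)}\,\kappa_{j_s}(0)\,F_s(q)}{\big(\sum_{j}\xi_je^{\lambda\kappa_j(0)}\big)^{s}}\right].
$$
On the right, conditionally on $(\xi,Q)$ the field $\kappa(0)$ is centered Gaussian with covariance $Q^{*r}$, so I would apply Gaussian integration by parts to the factor $\kappa_{j_s}(0)$: differentiating the numerator produces $\sum_{l=1}^s q^r_{j_sj_l}=1+\sum_{l=1}^{s-1}q^r_{j_sj_l}$, and differentiating the normalization produces $-s\,q^r_{j_sj_{s+1}}$ with a fresh replica index $j_{s+1}$ (the integrability needed for this infinite-dimensional computation is the same as in Lemmas~\ref{factorization} and~\ref{ppar}). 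Each resulting term is the $\E_r$-expectation of an evolved $k$-replica Gibbs average of a bounded function of the overlaps of $k$ points with $k\in\{s,s+1\}$, hence equals the corresponding $\E^{(k)}$ by \eqref{AC eqn}. This gives
$$
\E_r\Big[\sum_{i_1,\dots,i_s}\xi_{i_1}\cdots\xi_{i_s}\,\kappa_{i_s}(-1)\,F_s(q)\Big]=\lambda\Big(\E^{(s)}[F_s]+\sum_{l=1}^{s-1}\E^{(s)}[q^r_{ls}F_s]-s\,\E^{(s+1)}[q^r_{s,s+1}F_s]\Big).
$$

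Finally I would equate the two expressions for this expectation. The factor $\lambda$ appears exactly once on each side --- through $\E_r[\sum_i\xi_i\kappa_i(-1)]=\lambda(1-\E^{(2)}[q_{12}^r])$ on the first and through differentiating $e^{\lambda\kappa(0)}$ on the second --- so it cancels, the terms $\lambda\,\E^{(s)}[F_s]$ cancel, and rearranging what remains produces exactly \eqref{eqn gg2}. I expect the $\Lambda$-invariance step to require the most care: one has to push the reordering permutation through the $s$-fold replica sum and verify that it is the \emph{bare} Gaussian field at time $0$ --- not the increment $\lambda\kappa(0)$ --- that occupies the former time-$(-1)$ slot; this is precisely what makes the Gaussian integration by parts legitimate and what forces the two powers of $\lambda$ to match so that the identity comes out $\lambda$-free.
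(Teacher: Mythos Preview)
Your proof is correct and follows essentially the same approach as the paper: reduce to linear $\psi$ via Lemma~\ref{lem reduction}, rewrite the left side of \eqref{eqn factor} using $\Lambda$-invariance and Gaussian integration by parts, and equate with the right side computed via Lemma~\ref{ppar}. The paper phrases the final step as taking the $\lambda$-derivative of \eqref{eqn factor}, but since both sides are linear in $\lambda$ this is equivalent to evaluating at a fixed $\lambda\neq 0$ and cancelling the common factor, as you do.
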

\begin{proof}
As in the proof of Proposition \ref{prop AC}, $(\xi,Q)$ must be quasi-stationary under $\Phi_{r}$ for all linear functions $\lambda\kappa$ for $\lambda$ in an interval containing $0$.
In particular, it fulfills the hypothesis of Lemma \ref{factorization}. We take the $\lambda$-derivative on both sides of the identity \eqref{eqn factor}. A quick computation of the gaussian derivative of the left-hand side is possible as Proposition \ref{prop AC} and the factorization property show that only the terms where $\kappa_{i_1}$ is hit by the derivative are relevant. The straightforward calculation yields for the left-hand side
$$\sum_{l=1}^s\E^{(s)}\left[q_{ls}^{r}F_s(q)\right]-s\E^{(s+1)}\left[q_{s,s+1}^{r}F_s(q)\right].$$ 
The derivative of the r.h.s is simply $(1-\E^{(2)}[q_{12}^{r}])\E^{(s)}\left[F_s\right]$
by Lemma \ref{ppar}. The claim follows by combining both sides.
\end{proof}
Theorem \ref{thm main2} is now an easy corollary of the proposition. 
\begin{proof}[Proof of Theorem \ref{thm main2}]
By the hypothesis of the theorem, the ROSt is quasi-stationary under $\Phi_r$ for all $r\in\N$. In particular, the identities \eqref{eqn gg2} hold for every $r\in\N$ and hence for the distribution conditioned on the $\sigma$-field $\F_s$ generated by the overlaps of $s$ points
$$ 
\prob^{(s+1)}\left(q_{s,s+1}\in A \ | \F_s\right)=\frac{1}{s}\prob^{(2)}(q_{12}\in A)+\frac{1}{s}\sum_{l=1}^{s-1}\chi_A(q_{ls})
$$
where $A\subseteq [-1,1]$.
On the other hand, $\xi$ is independent of $Q$ by Theorem \ref{thm general}. Therefore, equation \eqref{eqn gg2} actually holds for every fixed integer $i_1,...,i_s$
$$
\prob\left(q_{i_s,i_{s+1}}\in A\ |\F_s\right)=\frac{1}{s}\prob(q_{i_1i_2}\in A)+\frac{1}{s}\sum_{l=1}^{s-1}\chi_A(q_{i_l,i_s}).
$$
Moreover, we know that given the directing measure $\mu$ on $\hilbert$, $Q$ is constructed as the Gram matrix of iid $\mu$-distributed elements. Hence the above can be rewritten as
$$
\E\left[ \bigotimes_{t=1}^s\mu\left(q_{s,s+1}\in A \right)\ \Big| \F_s\right] =\frac{1}{s}\E\left[\mu\otimes\mu\left(q_{12}\in A\right)\right]+\frac{1}{s}\sum_{l=1}^{s-1}\chi_A(q_{ls})
$$
and the theorem is proven.


\end{proof}
\appendix

\section{The evolution $\Phi$ revisited}
\label{app ext}
In this section, we prove Lemma \ref{lem exist} on the existence of a $\Lambda$-invariant probability measure on $\Omega=\Omega_{os}\times\prod_{t\in\Z}\E^{\N}$ which extends the law of a quasi-stationary ROSt. The exchangeability of the past time-steps of the evolution is also shown. We split the proof into two lemmas.
 \begin{lem}
Let $(\xi,Q)$ be a quasi-stationary ROSt under $\Phi_{r}$. There exists a unique $\Lambda$-invariant probability measure on $\Omega$ whose restriction on $\Omega_{os}$ is the law of $(\xi,Q)$. Moreover, this measure is ergodic under $\Lambda$ if and only if $(\xi,Q)$ is ergodic.
\label{lem1 exist}
\end{lem}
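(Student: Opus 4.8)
The plan is to recognize $(\Omega,\Lambda,\prob_r)$ as the \emph{natural two-sided extension} of the one-sided measure-preserving system $(\Omega_{+},\Lambda_{+},\prob_r)$, where $\Omega_{+}:=\Omega_{os}\times\prod_{t\geq 0}\R^{\N}$ and $\Lambda_{+}$ is given by the same formula as $\Lambda$ but acting only on the non-negative-time fields; quasi-stationarity of $(\xi,Q)$ is exactly what makes the forward measure \eqref{forw} invariant under $\Lambda_{+}$. Once this picture is in place, existence, uniqueness and the ergodicity dichotomy are the standard properties of natural extensions. I would nonetheless build the extension by hand, as a Kolmogorov projective limit of finite-horizon laws, because that keeps the (non-invertible) reindexing of the gaussian fields fully under control.

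First I would introduce, for each $n\geq 0$, the law $\mu_n$ on $\Omega_{os}\times\prod_{t\geq -n}\R^{\N}$ given by the following recipe: sample $(\xi,Q)\sim\prob$, then an iid family $\kappa(-n),\dots,\kappa(-1),\kappa(0),\kappa(1),\dots$ of gaussian fields with covariance $Q^{*r}$, keep each field component attached to its particle through the successive reorderings, apply the evolutions $\Phi_{\psi(\kappa(-n))},\dots,\Phi_{\psi(\kappa(-1))}$ in turn, and output the resulting ROSt together with the reindexed fields $(\kappa(t))_{t\geq -n}$. The key simplification is that, in the original labelling, the weight of particle $i$ after these $n$ steps is proportional to $\xi_i\exp\big(\sum_{k=-n}^{-1}\psi(\kappa_i(k))\big)$, so the output ROSt is a \emph{symmetric} function of $\kappa(-n),\dots,\kappa(-1)$ and its law is that of $n$ successive applications of $\Phi_r$ to $(\xi,Q)$, which by quasi-stationarity is again the law of $(\xi,Q)$; in particular $\mu_0$ is the forward measure \eqref{forw}. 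Next I would verify the compatibility that the push-forward of $\mu_{n+1}$ onto the coordinates $\big(\xi,Q,(\kappa(t))_{t\geq -n}\big)$ equals $\mu_n$: peeling off the first of the $n+1$ steps in $\mu_{n+1}$, quasi-stationarity gives that the state after it is again a $\prob$-sample, and a measurable reordering independent of the remaining fresh fields carries an iid covariance-$Q^{*r}$ family to an iid covariance-$(\pi\circ Q\circ\pi^{-1})^{*r}$ family (since $(\pi\circ Q\circ\pi^{-1})^{*r}=\pi\circ Q^{*r}\circ\pi^{-1}$), so the remainder of the $\mu_{n+1}$ recipe is exactly the $\mu_n$ recipe and the one consumed field --- now carried to time $-(n+1)$ --- is the only coordinate absent from $\mu_n$. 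Since $\Omega_{os}$ is compact metric and each $\R^{\N}$ is Polish, Kolmogorov's extension theorem then produces a probability measure $\prob_r$ on $\Omega=\Omega_{os}\times\prod_{t\in\Z}\R^{\N}$ under which $\big(\xi,Q,(\kappa(t))_{t\geq -n}\big)$ has law $\mu_n$ for every $n$, hence one that restricts to the forward measure \eqref{forw} on $\Omega_{+}$ and to the law of $(\xi,Q)$ on $\Omega_{os}$.

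For $\Lambda$-invariance I would check that $\prob_r$ and its image measure $\Lambda_*\prob_r$ induce the same law on $\big(\xi,Q,(\kappa(t))_{t\geq -n}\big)$ for each $n$. Since $\Lambda$ shifts the time index up by one and feeds $\kappa(0)$ into the evolution of $(\xi,Q)$, this law under $\Lambda_*\prob_r$ is a function of the $\prob_r$-data at times $\geq -(n-1)$ together with $\kappa(0)$, i.e.\ of $\mu_{n-1}$; unwinding the definition of $\Lambda$ shows the push-forward is precisely ``run the $\mu_{n-1}$ recipe, then perform one more evolution step'', which after relabelling the consumed fields $\kappa(-(n-1)),\dots,\kappa(-1),\kappa(0)$ as $\kappa(-n),\dots,\kappa(-1)$ and the future fields accordingly is the $\mu_n$ recipe; so the two laws agree and $\Lambda_*\prob_r=\prob_r$. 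For uniqueness, let $\prob'$ be any $\Lambda$-invariant law on $\Omega$ extending the forward measure \eqref{forw}; if $E$ is a cylinder event depending on $(\xi,Q)$ and on the fields in a finite window of times containing $0$, then $\Lambda^{-1}E$ depends only on that window shifted up by one (the extra dependence on $\kappa(0)$ created by the evolution of $(\xi,Q)$ already lies inside it), so after finitely many pull-backs $\Lambda^{-n}E$ is a cylinder in $\Omega_{+}$ and $\prob'(E)=\prob'(\Lambda^{-n}E)=\prob_r(\Lambda^{-n}E)$ is forced; hence $\prob'=\prob_r$. This is simply the universal property of the natural extension.

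For the last assertion: $(\Omega_{+},\Lambda_{+},\prob_r)$ is the deterministic representation of the Markov evolution $\Phi_r$ on $\Omega_{os}$ with stationary law that of $(\xi,Q)$, hence --- by the usual martingale argument for bounded space-time harmonic functions --- ergodic if and only if $(\xi,Q)$ is ergodic in the sense defined above; and a natural extension is ergodic if and only if its base is, using on one side the factor map onto $\Omega_{+}$ and on the other the same reduction of $\Lambda$-invariant sets to $\Omega_{+}$-measurable ones (via $\Lambda^{-n}$) already used for uniqueness. Combining the two equivalences yields the statement. The routine but genuinely delicate part throughout is the bookkeeping of how the gaussian fields are permuted along the evolution; the device of keeping each particle attached to its own field components is what collapses every compatibility check onto the single invariance $\Phi_r(\xi,Q)\distrib(\xi,Q)$ together with the elementary identity $(\pi\circ Q\circ\pi^{-1})^{*r}=\pi\circ Q^{*r}\circ\pi^{-1}$.
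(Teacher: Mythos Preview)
Your approach is essentially the same as the paper's: both construct the two-sided measure via Kolmogorov's extension theorem from a consistent family of finite-horizon laws (your $\mu_n$ is the paper's $\prob_r^{(T)}:=\prob_r\circ\Lambda^{-T}$), with compatibility reducing in each case to the single identity $\Phi_r(\xi,Q)\distrib(\xi,Q)$. Your write-up is in fact more complete than the paper's on two points: you give an explicit argument for uniqueness (via pulling back cylinder events by $\Lambda^{-n}$ into $\Omega_+$), whereas the paper simply asserts it; and your ergodicity argument, routed through the standard natural-extension equivalence, is more informative than the paper's one-line appeal to extremality.
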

\begin{proof}
For convenience, we denote the evolution $\Phi_{\psi(\kappa(t))}$  by $\Phi_{t}$ to lighten notation. We also write $\Lambda$ for the map on the space $\Omega_{-T}:=\Omega_{os}\times\prod_{t\geq -T} \R^\N$ whose action is to evolve the configuration recording the present increment as the last one:
\begin{align*}
\Lambda:\Omega_{-T}&\to \Omega_{-T-1}\\
(\xi,Q,(\kappa(t),t\geq -T))&\mapsto \left(\Phi_{\psi(\kappa(0))}(\xi,Q),(\kappa_\downarrow(t+1),t\geq -T-1)\right)
\end{align*}
First, consider the collection of measures $\prob_r^{(T)}:=\prob_r\circ\Lambda^{-T}$, $T\in\N$, where $\prob_r$ is of the form \eqref{forw}.
We will prove that these measures are consistent: for all $T\in\N$,
\begin{equation}
\prob^{(T+1)}_r\Big|_{\Omega_{-T}}=\prob_r^{(T)}.
\label{eqn exist}
\end{equation}
The extension of $\prob_r$ to $\Omega$ then follows by Kolmogorov's extension theorem. By definition, $\prob_r^{(T)}$ is the distribution of 
\begin{equation}
\left(\Phi_{T-1}\circ...\circ \Phi_{0}(\xi,Q), (\kappa_\downarrow(t+T), t\geq -T )\right)
\label{eqn prop exist}
\end{equation}
under $\prob_r$. Similarly, $\prob^{(T+1)}_r$ restricted to $\Omega_{-T}$ corresponds to the distribution of
$$
\left(\Phi_{T}\circ...\circ\Phi_{1}\left(\Phi_{0}(\xi,Q)\right), (\kappa_\downarrow(t+T+1),t\geq -T) \right)\ .
$$
By stationarity, 
$ \Phi_{0}(\xi,Q)$, has the same distribution as $(\xi,Q)$ though its law depends explicitly on $\kappa(0)$. However, as the field $(\kappa(t+T+1),t\geq -T )$ depends only on $\kappa(0)$ through $Q$ and as the distribution of $Q$ is preserved under evolution, we have that the restriction of $\prob_r^{(T+1)}$ is the law of
$$ \left(\Phi_{T}\circ...\circ\Phi_{1}(\xi,Q), (\kappa_\downarrow(t+T+1),t\geq -T)\right)$$
which only differs from \eqref{eqn prop exist} by a mere relabeling of $t$.
Equation \eqref{eqn exist} is established and the existence is proven. The invariance under $\Lambda$ is straightforward from the construction of the measure. Moreover, the extension is ergodic as it is extremal in the set of $\Lambda$-invariant measure if and only if the law of $(\xi,Q)$ is extremal.
\end{proof}

\begin{lem}
The sequence of past increments $(\kappa(t),t<0)$ is exchangeable conditionally on $(\xi,Q)$ under the probability measure constructed in Lemma \ref{lem1 exist}.

Let $\alpha$ be the empirical measure of $(\kappa(-t),t\in\N)$. The random variables $\psi(\kappa_i(-t))$ have finite $p$-moments under the probability measure $\prob_r( \ \cdot \ |\xi,Q,\alpha)$ for any $i,t \in\N$ and $1\leq p<\infty$ a.s.
\label{lem2 exist}
\end{lem}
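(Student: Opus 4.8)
The plan is to prove the two assertions in turn: the conditional exchangeability of the past increments by a symmetry argument carried out on the finite-dimensional laws $\prob_r^{(T)}$, and the moment bound by first transporting one step of the past onto the present via the $\Lambda$-invariance of $\prob_r$ and then by a Gibbs-weight estimate. For exchangeability, recall that the extension of $\prob_r$ to $\Omega$ is the projective limit of the measures $\prob_r^{(T)}=\prob_r\circ\Lambda^{-T}$ on $\Omega_{-T}$ constructed in Lemma~\ref{lem1 exist}, so it is enough to show that for each $T$ the block $(\kappa(-1),\dots,\kappa(-T))$ is exchangeable conditionally on $(\xi,Q)$ under $\prob_r^{(T)}$. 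The key observation is that increments add: under $\Phi_{T-1}\circ\cdots\circ\Phi_0$ the positions become $X_i+\sum_{t=0}^{T-1}\psi(\kappa_i(t))$ before reordering, so both the evolved pair $(\xi^{(T)},Q^{(T)})$ and the cumulative reordering permutation $\pi$ depend on the fields $\kappa(0),\dots,\kappa(T-1)$ only through $\{\sum_{t=0}^{T-1}\psi(\kappa_i(t))\}_i$, hence are invariant under permutations of the time labels $\{0,\dots,T-1\}$; and tracing the reindexing of the field in the definition of $\Lambda$ gives $\kappa^{(T)}_i(-s)=\kappa_{\pi(i)}(T-s)$. Therefore permuting the block coordinates $-1,\dots,-T$ is the same as permuting $\kappa(0),\dots,\kappa(T-1)$ among themselves, which under $\prob_r$, where these fields are i.i.d.\ given $(\xi,Q)$, leaves the joint law of $\big((\xi^{(T)},Q^{(T)}),\kappa^{(T)}(-1),\dots,\kappa^{(T)}(-T)\big)$ unchanged; this is the desired conditional exchangeability, and it passes to $\Omega$ as $T\to\infty$.

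For the moment bound, de Finetti applied to the exchangeable sequence just obtained gives that, conditionally on $(\xi,Q,\alpha)$, the past increments are i.i.d.\ with law $\alpha$, so $\E_r[|\psi(\kappa_i(-t))|^p\mid\xi,Q,\alpha]=\int|\psi(x_i)|^p\,d\alpha(x)$ and the claim is that this is a.s.\ finite for each fixed $i$. I would deduce it from the single integrated estimate $\E_r[\sum_i\xi_i|\psi(\kappa_i(-1))|^p]<\infty$: once this is known, the $i$-th term is finite in expectation, so conditioning on $(\xi,Q,\alpha)$ and using that $\xi_i$ is a function of $(\xi,Q)$ yields $\E_r[\xi_i\int|\psi(x_i)|^p\,d\alpha]<\infty$, hence $\xi_i\int|\psi(x_i)|^p\,d\alpha<\infty$ a.s.; and since a quasi-stationary ROSt has infinitely many positive weights, $\xi_i>0$ a.s., so $\int|\psi(x_i)|^p\,d\alpha<\infty$ a.s.

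It remains to prove the integrated estimate. Since $\prob_r$ restricted to $\Omega_{-1}$ equals $\prob_r^{(1)}=\prob_r\circ\Lambda^{-1}$, unfolding one step of $\Lambda$ and relabelling the sum turns it into a forward-increment quantity,
\[
\E_r\Big[\sum_i\xi_i|\psi(\kappa_i(-1))|^p\Big]=\E_r\!\left[\frac{\sum_k\xi_k e^{\psi(\kappa_k(0))}|\psi(\kappa_k(0))|^p}{\sum_j\xi_j e^{\psi(\kappa_j(0))}}\right].
\]
Here $\kappa(0)$ is independent of $(\xi,Q)$ with standard Gaussian marginals, $|\psi|$ grows at most linearly by Assumption~\ref{intro ass}, and $\sum_j\xi_j=1$ by quasi-stationarity, so $\sum_j\xi_j e^{\psi(\kappa_j(0))}\ge e^{\sum_j\xi_j\psi(\kappa_j(0))}$ by Jensen. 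Using $|y|^pe^y\le C_p(1+e^{2y})$ one splits the right-hand side into $C_p\,\E_r[(\sum_k\xi_k e^{2\psi(\kappa_k(0))})\,e^{-\sum_j\xi_j\psi(\kappa_j(0))}]$ plus $C_p\,\E_r[(\sum_j\xi_j e^{\psi(\kappa_j(0))})^{-1}]$; a Cauchy--Schwarz on the first term, a few more applications of Jensen, and the identity $\E_r[\sum_j\xi_j g(\kappa_j(0))]=\E[g(Y)]$ for $g\ge0$ and standard Gaussian $Y$ (valid because $\kappa(0)$ is independent of $(\xi,Q)$ and $\E_r[\sum_j\xi_j]=1$) reduce everything to finitely many Gaussian exponential moments $\E[e^{c|\psi(Y)|}]<\infty$. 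The main obstacle is precisely this integrated estimate: it is the only step where quasi-stationarity enters essentially — through $\sum_j\xi_j=1$, which both furnishes the Jensen lower bound on the partition function and makes the mass identity $\E_r[\sum_j\xi_j g(\kappa_j(0))]=\E[g(Y)]$ available — and it is what lets one avoid any direct control of how atypically large $\kappa_i(-1)$ can be for a fixed index $i$, the per-$i$ bound being recovered only afterwards via $\xi_i>0$.
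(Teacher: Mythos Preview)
Your argument is correct and follows essentially the same route as the paper: exchangeability via the observation that $\Phi_{T-1}\circ\cdots\circ\Phi_0(\xi,Q)$ and the cumulative reindexing permutation depend on $\kappa(0),\dots,\kappa(T-1)$ only through the sums $S_i(T-1)=\sum_{t=0}^{T-1}\psi(\kappa_i(t))$ (the paper phrases this as the $\sigma$-algebra identity $\sigma(\xi,Q,S)=\sigma(\Phi_T(\xi,Q),\tilde S)$ and conditions on the sums; you argue the symmetry directly), and the moment bound via the $\Lambda$-invariance identity $\E_r[\sum_i\xi_i|\psi(\kappa_i(-1))|^p]=\E_r[\sum_i\xi_i e^{\psi(\kappa_i(0))}|\psi(\kappa_i(0))|^p/\sum_j\xi_j e^{\psi(\kappa_j(0))}]$ followed by a Cauchy--Schwarz/Jensen estimate. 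The only cosmetic difference is that the paper's Cauchy--Schwarz in $\E_r$ together with Jensen for $y\mapsto y^2$ and $y\mapsto 1/y^2$ gives the slightly cleaner bound $\big(g^{(2p)}(2)\big)^{1/2}g(-2)^{1/2}$ with $g(\lambda)=\int e^{-z^2/2}e^{\lambda\psi(z)}\,dz/\sqrt{2\pi}$, avoiding your detour through $|y|^pe^y\le C_p(1+e^{2y})$.
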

\begin{proof}
Denote by $\sigma(X)$ the $\sigma$-algebra generated by a random variable $X$. Define $S_i(T-1):=\sum_{t=0}^{T-1}\psi(\kappa_i(t))$ where the indexing $i$ is done through the ordering at time $0$. We claim that
\begin{equation}
\sigma\Big(\xi,Q,(S_i(T-1),i\in\N)\Big)=\sigma\left(\Phi_{T-1}\circ...\circ\Phi_{0}(\xi,Q),(\tilde{S}_j(T-1),j\in\N)\right)
\label{exch eqn}
\end{equation}
where $(\tilde{S}_j(T-1),j\in\N):=(S_i(T-1),i\in\N)_\downarrow$ are the increments of the $T$ time-steps reindexed with respect to the ordering after evolution. To shorten notation, let us write $\mathcal{G}$ for the left-hand side and $\tilde{\mathcal{G}}$ for the right-hand side. For convenience, we write $\tilde{\xi}$ for the evolved $\xi$ after $T$ time-steps i.e.
$$\tilde{\xi}:=\left(\frac{\xi_ie^{ S_i(T-1)}}{\sum_k\xi_ke^{ S_k(T-1)}},i\in\N\right)_\downarrow.$$
It is clear from the above expression that $\tilde{\xi}$ is $\mathcal{G}$-measurable. As the reindexing of $Q$ and $S_i(T-1)$ induced by the evolution depends only on $\tilde{\xi}$, we see that actually $\Phi_{T-1}\circ...\circ\Phi_{0}(\xi,Q)$ and $(\tilde{S}_j(T-1),j\in\N)$ are $\mathcal{G}$-measurable. The $\supseteq$ part of equation \eqref{exch eqn} is proven. For the $\subseteq$ part, it is easy to check that
$$\xi=\left(\frac{\tilde{\xi}_je^{-\tilde{S}_j(T-1)}}{\sum_k \tilde{\xi}_ke^{-\tilde{S}_k(T-1)}},j\in\N\right)_\downarrow.$$
Similarly as before, we conclude that $(\xi,Q)$ and $(S_i(T-1),i\in\N)$ are $\tilde{\mathcal{G}}$-measurable. Equation \eqref{exch eqn} is proven

Recall that the fields $\kappa(t)$ , $0\leq t\leq T-1$, indexed by the ordering at time $0$ are iid-distributed conditionally on $(\xi,Q)$. In particular, they are exchangeable given the sums $(S_i(T-1),i\in\N)$. Therefore, for any permutation $\rho$ of $T$ elements, the following holds
$$\prob_r(\kappa(t)\in A_t, \ 0\leq t\leq T-1 \ |\mathcal{G})=\prob_r(\kappa(\rho t)\in A_t, \ 0\leq t\leq T-1 \ |\mathcal{G})$$
for any $A_t$, $ 0\leq t\leq T-1 $, Borel sets of $\R^\N$. Moreover, the fields $\kappa(t)$ can be indexed with the ordering at time $T$ as this ordering is $\mathcal{G}$-measurable. From \eqref{exch eqn}, it follows that
$$\prob_r(\kappa(t)\in A_t,\ 0\leq t\leq T-1 \ |\tilde{\mathcal{G}})=\prob_r(\kappa(\rho t)\in A_t,\ 0\leq t\leq T-1 \ |\tilde{\mathcal{G}}).$$
The first claim is obtained from the above by integrating over $(\tilde{S}_j(T-1),j\in\N)_\downarrow$ and using invariance under $\Lambda$.

For the second claim, we can assume without loss of generality that $p$ is an integer.
By exchangeability in $t$, it suffices to prove the claim for $\psi(\kappa_i(-1))$, $i\in\N$.
The conclusion will be obtained by proving that $ \E_r\left[\ \sum_i\xi_i \ |\psi(\kappa_i(-1))|^p \ \right] <\infty$.
We have by definition of the past increment
$$
\E_r\left[\sum_i\xi_i \text{ }|\psi(\kappa_i(-1))|^p\right] 
= \E_r\left[\frac{\sum_i\xi_ie^{\psi(\kappa_i(0))}\text{ } |\psi(\kappa_i(0))|^p}{\sum_j\xi_j\text{ }e^{\psi(\kappa_j(0))}}\right].
$$
The Cauchy-Schwarz inequality followed by applications of Jensen's inequality with the functions $f(y)=y^2$ and $f(y)=1/y^2$ shows that the right-hand side is smaller than
$$
 \E_r\left[\sum_i\xi_i e^{2\psi(\kappa_i(0))}\text{ }\psi(\kappa_i(0))^{2p}\right]^{1/2}\E_r\left[\sum_i\xi_i e^{-2\psi(\kappa_i(0))}\right]^{1/2}.
$$
As $\kappa$ is independent of $\xi$ conditionally on $Q$, we can take the expectation over each $\kappa_i$ through to get $\left(\frac{d^{2p}}{d^{2p}}g(2)\right)^{1/2}g(-2)^{1/2}$ where $g(\lambda):=\int_\R\frac{e^{-z^2/2}}{\sqrt{2\pi}}e^{\lambda\psi(z)}dz$. But this is finite whenever $\psi$ satisfies Assumption \ref{intro ass}.
\end{proof}

\section{Reduction to the linear case}
\label{app var}
The proof of the main theorem in \cite{argaiz} was achieved by reducing the evolution with a smooth $\psi$ to an evolution with a linear $\psi$ by a central limit theorem argument. In brief, one considers $T$ independent steps of the evolution 
\begin{equation}
\Phi_{\lambda\psi(\kappa(T-1))}\circ ... \circ \Phi_{\lambda\psi(\kappa(0))}
\end{equation}
together with the scaling $\lambda\to \lambda/\sqrt{T}$. In the limit $T\to\infty$, the dynamics has simply gaussian increments with an effective covariance $\hat{q}_{ij}:=\E[\psi(\kappa_i)\psi(\kappa_j)]$. We could conclude that the $Q$-factors of $(\xi,\hat{Q})$ are RPC's from the analysis of the linear case. Monotonicity of the function $q_{ij}^r\mapsto \hat{q}_{ij}(r)$ for $r$ large enough and properties of the RPC's permitted to deduce that $(\xi,Q)$ is a RPC whenever $(\xi,\hat{Q})$ is. 
A similar reduction to the linear case can be carried when quasi-stationarity is assumed for a collection of functions $\psi(\beta\cdot +h)$. Under the new assumption, the limiting linear dynamics turns out to be somewhat simpler as it produces the same effective covariance matrix as the original system. The proof is very similar to the proof of Lemma 4.8 in \cite{argaiz}. We present it for completeness.
\begin{lem}
Let $h\in\R$ be such that $\psi'(h)\neq 0$. If $(\xi,Q)$ is a quasi-stationary ROSt under $\Phi_r$ with function $\psi(\beta\kappa+h)$ for all $\beta$ in an interval containing $0$, then $(\xi,Q)$ is also quasi-stationary under $\Phi_r$ with function $\lambda\kappa$ for all $\lambda$ in an interval containing $0$.
\label{lem reduction}
\end{lem}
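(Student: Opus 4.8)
The plan is to carry out the central-limit-theorem reduction sketched in the introduction. Fix $\beta_0>0$ so that the interval $(-\beta_0,\beta_0)$ lies in the set of $\beta$ for which $(\xi,Q)$ is quasi-stationary under $\Phi_r$ with function $\psi(\beta\kappa+h)$, and fix a test parameter $\lambda$ small enough that $\lambda/\sqrt{T}\in(-\beta_0,\beta_0)$ for all $T\geq 1$. Consider $T$ independent time-steps of the evolution with function $\psi((\lambda/\sqrt{T})\kappa+h)$, that is the composition $\Phi_{\psi((\lambda/\sqrt T)\kappa(T-1)+h)}\circ\cdots\circ\Phi_{\psi((\lambda/\sqrt T)\kappa(0)+h)}$. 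By quasi-stationarity, the law of $(\xi,Q)$ is invariant under this composition for every $T$. The first step is to Taylor-expand $\psi((\lambda/\sqrt T)\kappa_i(t)+h)=\psi(h)+\psi'(h)(\lambda/\sqrt T)\kappa_i(t)+\tfrac12\psi''(\theta_{i,t})(\lambda^2/T)\kappa_i(t)^2$; the constant $\psi(h)$ cancels in the normalized weights in \eqref{evolution rmpr1}, so the accumulated exponent of particle $i$ after $T$ steps is $\psi'(h)\,(\lambda/\sqrt T)\sum_{t=0}^{T-1}\kappa_i(t)+O(\lambda^2)$ with the error term controlled in expectation using the boundedness of $\psi''$ and the fact that $\kappa$ has covariance $Q^{*r}$ with diagonal $1$ (so $\kappa_i(t)^2$ has bounded expectation uniformly in $i,t$).

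The second step is the multivariate CLT: the vector $\bigl((\lambda/\sqrt T)\sum_{t=0}^{T-1}\kappa_i(t)\bigr)_{i}$, conditionally on $(\xi,Q)$, has covariance $\lambda^2 Q^{*r}$ for every $T$ (the $t$-sum of $T$ iid copies of $\kappa$ divided by $\sqrt T$), so in the limit $T\to\infty$ it converges in distribution to a centered gaussian field $\tilde\kappa$ with covariance $\lambda^2 Q^{*r}$, which is exactly the field driving the linear evolution $\Phi_r$ with function $\psi'(h)\cdot(\text{linear})$ — equivalently, after absorbing the nonzero constant $\psi'(h)$, the evolution $\Phi_r$ with function $\lambda\kappa$ up to rescaling $\lambda$. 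Here is where the hypothesis $\psi'(h)\neq 0$ is used: it guarantees the limiting increments are genuinely nondegenerate and that the effective covariance is a nonzero multiple of $Q^{*r}$ rather than collapsing. Combining this limit with the invariance of the law of $(\xi,Q)$ under each finite-$T$ composition, and checking that the map $(\xi,Q)\mapsto\Phi_{\psi((\lambda/\sqrt T)\kappa+h)}^{\circ T}(\xi,Q)$ is continuous enough (the weights in \eqref{evolution rmpr1} depend continuously on the exponents in the uniform topology on $P_m$, using Assumption \ref{intro ass} that $\psi\in C^2$ with bounded derivatives), we pass to the limit and conclude that the law of $(\xi,Q)$ is invariant under $\Phi_r$ with function $\lambda\kappa$.

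The main obstacle I anticipate is making the limit interchange rigorous: one needs that $\Phi_{\psi((\lambda/\sqrt T)\kappa+h)}^{\circ T}(\xi,Q)$ converges in distribution on the compact space $\Omega_{os}$ to $\Phi_r(\xi,Q)$ with the linear function, and that this convergence is compatible with the exact equality in law at each finite $T$. This requires uniform control (in $i$ and $T$) of the tail of the weights $\xi_i e^{\text{exponent}}$, so that the normalization $\sum_j\xi_j e^{\text{exponent}_j}$ behaves well and no mass escapes — precisely the kind of estimate that the finiteness of $g(\lambda)=\int e^{-z^2/2}e^{\lambda\psi(z)}dz/\sqrt{2\pi}$ and its derivatives (established in the proof of Lemma \ref{lem2 exist}) is designed to supply. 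The remaining details — bounding the Taylor remainder in $L^1(\prob_r)$, invoking Slutsky to kill the $O(\lambda^2/\sqrt T)$ corrections, and verifying continuity of the evolution map in the topology of $\Omega_{os}$ — are routine and essentially identical to the argument for Lemma 4.8 in \cite{argaiz}, which is why I would only sketch them and refer there for the full computation.
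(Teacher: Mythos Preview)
Your proposal is correct and follows essentially the same route as the paper: scale $\beta\sim\lambda/\sqrt{T}$, apply a finite-dimensional CLT to identify the limiting increments as centered gaussian with covariance $\lambda^2 Q^{*r}$, and then refer to Lemma~4.8 of \cite{argaiz} for the uniform-in-$T$ tail control needed to pass from finitely many particles to the full system. The only cosmetic difference is that the paper applies the CLT directly to the increments $\psi(\beta\kappa_i(t)+h)-\psi(h)$ rather than first splitting off the linear Taylor term, and it makes the finite-dimensional approximation explicit via the truncations $f_\delta$, $f_{\delta,\delta'}$ and the event $A_{N,\delta',T}$---exactly the ``no mass escapes'' estimate you flag as the main obstacle.
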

\begin{proof}
First, we recall that the law of a ROSt is determined by the class of continuous functions that depend only on a finite number of points (Proposition 1.2 in \cite{argaiz}). Let $f:\Omega_{os}\to \R$ be a continuous function depending on the first $n$ points for some $n\in\N$ i.e. $f(\xi,Q)=f(\xi_1,...,\xi_n; Q_n)$ where $Q_n=\{q_{ij}\}_{1\leq i,j\leq n}$. Consider $T$ independent copies of the gaussian field $\kappa$: $(\kappa(t),0\leq t\leq T-1)$. Define the evolution by $T$ independent steps
\begin{equation}
\Phi_T:=\Phi_{\psi(\beta\kappa(T-1)+h)}\circ ... \circ \Phi_{\psi(\beta\kappa(0)+h)} \ .
\label{eqn1 app}
\end{equation}
 To prove the claim, we need to show that for any such $f:\Omega_{os}\to\Omega_{os}$ and under an appropriate scaling of $\beta$
\begin{equation}
\E_r[f(\xi,Q)]=\lim_{T\to\infty}\E_r[f(\Phi_T(\xi,Q))]=\E_{r}[f(\Phi_{\lambda\kappa}(\xi,Q))]
\label{eqn reduction}
\end{equation}
for some $\lambda\in\R$. The first equality holds by the quasi-stationarity hypothesis for all $\beta$ in a neighborhood of $0$. We prove the second one.

We choose the scaling
$$ \beta=\beta(T)=\frac{\lambda}{|\psi'(h)| \sqrt{T}}\ .$$
It is straightforward to check, by expanding $\psi$ around $h$ and using the boundedness of the second derivatives, that with this choice
$$ 
\lim_{T\to\infty}\sum_{t=0}^{T-1}\E_r\Big[\big(\psi(\beta\kappa_i(t)+h)-\psi(h)\big)\big(\psi(\beta\kappa_j(t)+h)-\psi(h)\big)\ \Big| \ Q\Big]
= \lambda^2q_{ij}^r \ .
$$
Note that, because of the normalization of the dynamics, the effective increment of each particle can be taken to be $\psi(\beta\kappa_i+h)-\psi(h)$. Hence, by the finite-dimensional central limit theorem and the above convergence, the increments of a fixed number of particles converge to a centered gaussian field with covariance matrix $\lambda^2 q_{ij}^r$.
It remains to prove that the limit $T\to\infty$ of \eqref{eqn reduction} is well-approximated by considering a large but finite number of particles. 

For $\delta',\delta\in(0,1]$ and $\delta'<\delta$, we define the function $f_{\delta}$ and $f_{\delta,\delta'}$ as
$$ f_\delta(\xi_1,...,\xi_n;Q_n):=f(\xi_1,...,\xi_n;Q_n)\chi_{\{\xi_n\geq \delta\}}$$
and
$$ f_{\delta,\delta'}(\xi_1,...,\xi_n;Q_n):=f_\delta(\xi_1/N_{\delta'},...,\xi_n/N_{\delta'};Q_n) $$
where $ N_{\delta'}:=\sum_{i:\xi_i\geq \delta'}\xi_i.$
Clearly, $f_\delta\to f$ a.s. when $\delta\to 0$ as $\xi_n>0$ a.s. Notice also that $N_{\delta'}\to 1$ when $\delta'\to 0$. Therefore, by continuity
$$ \lim_{\delta\to 0}\lim_{\delta'\to 0}f_{\delta,\delta'}(\xi_1,...,\xi_n;Q_n)=f(\xi_1,...,\xi_n;Q_n) \text{ a.s.}$$
Let $A_{N,\delta',T}^c$ be the event that all evolved points in $[\delta',1]$ after $T$ steps come from the first $N$ before evolution. We write $\Phi_r(\xi,Q)|_N$ for the evolution restricted to the first $N$ points of $(\xi,Q)$. 
Because the function $f_{\delta,\delta'}(\Phi_r(\xi,Q))$ on the event $A^c_{N,\delta',T}$ is effectively a function of $\Phi_r(\xi,Q)|_N$, one has
\begin{multline}
\Big|\E_r[f_{\delta,\delta'}(\Phi_T(\xi,Q))]-\E_r\left[f_{\delta,\delta'}(\Phi_T(\xi,Q)|_N)\right]\Big|\leq
\\ \E_r\left[\big|f_{\delta,\delta'}(\Phi_T(\xi,Q))-f_{\delta,\delta'}(\Phi_T(\xi,Q)|_N)\big|\chi_{A_{N,\delta',T}}\right].
\label{eqn1 evolution estimate}
\end{multline}
The limit \eqref{eqn reduction} will thus hold by respectively taking the limits $T\to\infty$, $N\to\infty$ and $\delta,\delta'\to 0$ if we can show that the probability of the event $A_{N,\delta',T}$ is small for $N$ large uniformly in $T$. But this is clear from the fact that under the chosen scaling of $\beta$  (see Lemma 4.6 of \cite{argaiz} for details) : 
$$ \prob_r\left(A_{N,\delta',T}\right)\leq K\sum_{i>N}\E[\xi_i]$$
for some constant $K$ that only depends on $\psi$, $\delta'$ and $\lambda$. 
\end{proof}


\end{document}